\newtheoremstyle{mystyle}{}{}{\slshape}{2pt}{\scshape}{.}{ }{} 
\newtheorem{thm}{Theorem}[section]
\newtheorem{cor}[thm]{Corollary}
\newtheorem{prop}[thm]{Proposition}
\newtheorem{lemme}[thm]{Lemma}
\newtheorem{fait}[thm]{Fact}
\newtheorem{conjecture}[thm]{Conjecture}
\newtheorem{problem}[thm]{Problem}
\newtheorem{obs}[thm]{Observation}
\theoremstyle{definition}
\newtheorem{defi}[thm]{Definition}
\theoremstyle{mystyle}
\theoremstyle{remark}
\newtheorem{rem}[thm]{Remark}
\newcommand{\impl}{\rightarrow}
\newcommand{\pprec}{\prec^+}
\newcommand{\monster}{\mathcal U}
\newcommand{\ordI}{\mathcal I}
\newcommand{\ordJ}{\mathcal J}
\DeclareMathOperator{\tp}{tp}
\DeclareMathOperator{\acl}{acl}
\DeclareMathOperator{\dpr}{dp-rk}
\DeclareMathOperator{\dprk}{dp-rk}
\DeclareMathOperator{\alt}{alt}
\title{Dp-minimality: invariant types and dp-rank}
\author{Pierre Simon}
\address{Universit\'e de Lyon; CNRS\\
Universit\'e Lyon 1\\
Institut Camille Jordan UMR5208\\
43 boulevard du 11 novembre 1918\\
69622 Villeurbanne Cedex, France}
\thanks{Partially supported by the European Research Council under the European Unions Seventh Framework Programme (FP7/2007-2013) / ERC Grant agreement no. 291111.}
\thanks{Partially supported by ValCoMo (ANR-13-BS01-0006)}
\begin{document}
\begin{abstract}This paper has two parts. In the first one, we prove that an invariant dp-minimal type is either finitely satisfiable or definable. We also prove that a definable version of the (p,q)-theorem holds in dp-minimal theories of small or medium directionality.

In the second part, we study dp-rank in dp-minimal theories and show that it enjoys many nice properties. It is continuous, definable in families and it can be characterised geometrically with no mention of indiscernible sequences. In particular, if the structure expands a divisible ordered abelian group, then dp-rank coincides with the dimension coming from the order.
\end{abstract}
\maketitle


%

The class of dp-minimal theories is a generalisation suggested by Shelah of the classes of o-minimal and C-minimal theories. It also contains the field $\mathbb Q_p$ of p-adics. Strongly related to it is the notion of \emph{dp-rank} defined in NIP theories as follows: the dp-rank of a partial type $\pi(x)$ over $A$ is $\geq k$ if there are $a\models \pi$ and $k$ sequences ($I_i$, $i<k$) mutually indiscernible over $A$ (that is, $I_i$ is indiscernible over $AI_{\neq i}$) none of which is indiscernible over $Aa$. Dp-minimal theories are theories in which all 1-types have dp-rank 1, or equivalently $\dpr(x=x)=1$.

This paper is divided into two main sections which can be read independently. In the first one we study invariant types and a definable version of the $(p,q)$-theorem. In the second one, we prove some properties of dp-rank in dp-minimal theories.

We first present our results on invariant types. Assume that $T$ is NIP. In \cite{Sh900}, Shelah proves that given an arbitrary type $p$ over some saturated $M$ and $a\models p$, one can find some tuple $c\in \monster$ such that $\tp(c/M)$ is finitely satisfiable in a small $B\subseteq M$ and $\tp(a/cM)$ is weakly orthogonal to all types finitely satisfiable in some small $B'\subseteq M$. Thus one can consider $c$ as a maximal analysis of $p$ over finitely satisfiable types. Our initial idea was to consider what happens when $p$ is taken to be invariant. It is an easy observation that if $p$ is orthogonal to all finitely satisfiable types ({\it i.e.}, we cannot do any analysis), then it must be definable (see Lemma \ref{lem_def}). This gave rise to the hope of being able to analyse any invariant type over finitely satisfiable types, with a definable `quotient'. However, this ended up being harder than expected and the main questions are left unresolved. We only manage to treat the dimension one case, where no mixed situation can occur. Thus our first main theorem is the following:

\begin{thm}
If $p$ is an invariant type of dp-rank 1, then it is either finitely satisfiable or definable.
\end{thm}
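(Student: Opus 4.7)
The plan is to argue by contrapositive: assume $p$ is an invariant global type of dp-rank $1$ which is neither finitely satisfiable in $M$ nor $M$-definable, and derive a contradiction with the dp-rank hypothesis. The strategy is to use the Shelah analysis to produce an intermediate parameter, then argue that dp-rank $1$ is incompatible with the non-triviality of this intermediate layer.

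First, I would fix $a \models p$ and apply the main result of \cite{Sh900} to find a small tuple $c \in \monster$ with $q := \tp(c/M)$ finitely satisfiable in $M$ and $\tp(a/cM)$ weakly orthogonal to every type finitely satisfiable in $M$. By Lemma \ref{lem_def} applied over the base $cM$, the type $\tp(a/cM)$ is then definable over $cM$; let $r$ denote its unique global $cM$-definable extension. Two observations dispose of the degenerate cases: if $c \in \dcl(M)$, then $r$ is already $M$-definable and $p = r|M$ is $M$-definable, contradicting our assumption; and if $a \in \acl(cM)$, then pulling back any $\phi(x,d) \in p$ through an algebraicity witness $\chi(x,c,m)$ (with $m \in M$) via the formula $\exists x\,(\phi(x,d) \wedge \chi(x,y,m))$ and applying finite satisfiability of $q$'s global extension produces a realization of $\phi(x,d)$ in $\acl(M) = M$, contradicting non-finite-satisfiability of $p$. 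Hence one can assume $c \notin \dcl(M)$ and $a \notin \acl(cM)$.

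The main obstacle is then to extract from this configuration a contradiction with dp-rank $1$ of $p$. Since $p$ is not $M$-definable, the $cM$-definition schema of $r$ cannot descend to an $M$-schema and so depends essentially on $c$; this should mean that for a Morley sequence $(c_i)_{i<\omega}$ of $q$ over $M$, there is a formula $\phi(x,y)$ and a realization $a'\models p$ for which $(c_i)$ fails to be $a'M$-indiscernible. Dually, since $p$ is also not finitely satisfiable, one should produce from the failure of $p$ to commute with some $M$-finitely-satisfiable type a second $M$-indiscernible sequence $(b_j)_{j<\omega}$ witnessing non-$a'M$-indiscernibility for another formula. The key difficulty is to arrange these two sequences to be mutually $M$-indiscernible while simultaneously witnessing non-$a'M$-indiscernibility for a common realization $a' \models p$; this should follow from the commutation of the definable type $r$ with finitely satisfiable types in NIP, allowing one to assemble a two-dimensional Morley array from which the desired pair is extracted. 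The resulting pair of mutually $M$-indiscernible, non-$a'M$-indiscernible sequences contradicts the assumption that $p$ has dp-rank $1$, completing the proof.
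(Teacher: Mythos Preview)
Your proposal has the right instinct---use dp-rank $1$ to rule out two independent ``directions'' of non-triviality---but the execution in the final paragraph does not deliver, and the detour through \cite{Sh900} creates problems rather than solving them.

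The central gap is in producing the two sequences. For the first: the assertion that a Morley sequence $(c_i)$ of $q$ fails to be $a'M$-indiscernible does not follow from the $cM$-definition schema ``depending essentially on $c$''. A definable type may well commute with the finitely satisfiable type of its defining parameter; what you would need is that $p$ fails to commute with some coheir of $q$, and nothing in the Shelah decomposition hands you that. For the second sequence, you write ``since $p$ is also not finitely satisfiable, one should produce from the failure of $p$ to commute with some $M$-finitely-satisfiable type a second $M$-indiscernible sequence''---but non-finite-satisfiability does not yield non-commutation with a finitely satisfiable type; that is what non-\emph{definability} gives (via Lemma~\ref{lem_def}). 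So both alleged sources of badness collapse to the single fact that $p$ is not definable, and you have only one sequence, not two. There are also secondary issues: the theorem is stated for arbitrary $T$, whereas \cite{Sh900} requires NIP; Lemma~\ref{lem_def} concerns global invariant types over a model, not types over a set of the form $cM$; and in your degenerate case $a\in\acl(cM)$, finite satisfiability of $\tp(c/M)$ in $M$ cannot be used to realize a formula involving the external parameter $d$.

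The paper's argument is more direct and avoids \cite{Sh900} entirely. Assuming $p$ is not definable, one fixes a single finitely satisfiable $q$ not commuting with $p$ (Lemma~\ref{lem_def}) and, for an arbitrary $\phi(x;d)\in p$, shows $\phi(M;d)\neq\emptyset$. The key step: having built $\bar b$ from $q$ (one sequence not indiscernible over $Na$), any \emph{second} such sequence, built from a type finitely satisfiable in a large $N$, would together with $\bar b$ violate dp-rank $1$; hence every such type is weakly orthogonal to $\tp(a/N\bar b)$, and by compactness $\tp(a/\bar b N_1)\vdash \tp_\phi(a/N)$ for a suitable small $N_1\prec N$ whose type over $Md$ is finitely satisfiable in $M$. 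One then pushes the witnessing formulas down to $M$ using finite satisfiability of $\bar b$ in $M$ and of $N_1$ over $Md$. Thus dp-rank $1$ is not used to derive a global contradiction, but to show that the single sequence $\bar b$ already determines the $\phi$-type of $a$.
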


In turned out that those ideas were useful in studying another related problem: that of finding definable (or partially definable) types. If $p(x)$ is a global $M$-invariant type, then for any formula $\phi(x;y)$, we can consider the subset $d_p\phi \subseteq S_y(M)$ of types $q(y)$ such that $p\vdash \phi(x;b)$ for any $b\models q$. The type $p$ is definable exactly when the sets $d_p\phi$ are open for all $\phi$ (and hence they are also closed). We are concerned here with finding types extending some given formula and for which some prescribed type $q$ falls in the interior of $d_p\phi$. We only succeed under strong assumptions on the theory.

\begin{thm}
Assume that $T$ is dp-minimal and of medium or small directionality, then given any model $M$ and formula $\phi(x;b)\in L(\monster)$, if $\phi(x;b)$ does not fork over $M$, then there is a formula $\theta(y)\in \tp(b/M)$ such that $\bigwedge_{b'\in \theta(\monster)} \phi(x;b')$ is consistent (and hence does not fork over $M$).
\end{thm}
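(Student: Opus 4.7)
The plan is to extend $\phi(x;b)$ to a global $M$-invariant type, apply the Shelah-style decomposition recalled in the introduction to produce a definable ``piece'', and then push the resulting local definability down to $M$ using the directionality hypothesis. Since $T$ is NIP and $\phi(x;b)$ does not fork over $M$, extend $\phi(x;b)$ to a global type $p(x)$ invariant over $M$. Fix $a\models p$ and, by the decomposition recalled at the start of the introduction, produce a small tuple $c$ with $\tp(c/M)$ finitely satisfiable in $M$ and $\tp(a/Mc)$ weakly orthogonal to every global type finitely satisfiable in $M$. By Lemma~\ref{lem_def}, $\tp(a/Mc)$ then extends to a global type $q^\star$ definable over $Mc$.

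Definability of $q^\star$ yields an $L(Mc)$-formula $\theta^\flat(y)$ cutting out $\{b' : \phi(x;b') \in q^\star\}$, and $\theta^\flat$ holds of $b$. For every $b'\models\theta^\flat$, the formula $\phi(x;b')$ lies in $q^\star$, so $\{\phi(x;b') : b'\models\theta^\flat\}$ is consistent, realised by any element of $q^\star$. It remains to replace $\theta^\flat$, which lives over $Mc$, by a formula $\theta(y)\in\tp(b/M)$ with the same consistency property.

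This descent is the crux, and where the directionality hypothesis must enter. Finite satisfiability of $\tp(c/M)$ lets one try to substitute $c^\star\in M$ for $c$ inside $\theta^\flat$, but a priori different choices of $c^\star$ produce different definable subsets of $S_y(M)$, none obviously contained in $\tp(b/M)$. Medium or small directionality precisely bounds the coheirs of $\tp(c/M)$, and hence the variability of these substitutions. Under this bound, I expect one can take a suitable Boolean combination of the $\theta^\flat$'s coming from different witnesses, or run a $(p,q)$-style combinatorial argument for NIP families with the directionality bound serving as the quantitative input, to produce a uniform $\theta(y)\in\tp(b/M)$ such that every $b'\models\theta$ still has $\phi(x;b')\in q^\star$, yielding the desired consistency.

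The main obstacle is exactly this descent step: without a directionality bound, the finitely satisfiable type $\tp(c/M)$ could have too many independent coheirs to be controlled by a single $M$-formula, and the conjunction over $\theta(\monster)$ cannot be made uniformly consistent. Overcoming it is where the specific structure of dp-minimal theories of medium or small directionality must be exploited, presumably via an explicit combinatorial analysis of the coheirs of $\tp(c/M)$ refined by $\phi(x;y)$, pushed through the weak orthogonality provided by the Shelah decomposition.
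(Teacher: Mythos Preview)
Your proposal has two genuine gaps and departs substantially from the paper's argument.

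First, the appeal to Lemma~\ref{lem_def} is illegitimate as stated. That lemma concerns a \emph{global invariant} type: it says such a type is definable iff it commutes with every finitely satisfiable type. What Shelah's decomposition gives you is that $\tp(a/Mc)$ is \emph{weakly orthogonal} (over the set $Mc$) to types finitely satisfiable in $M$. This is a local statement about a type over a non-model; it does not hand you a global $Mc$-invariant extension, let alone one that commutes with all finitely satisfiable types over $Mc$. Bridging that gap is not automatic---indeed, turning this kind of analysis into a statement about definable quotients is precisely the open problem the introduction flags as ``harder than expected''. You also never invoke dp-minimality, yet it is essential: the paper's machinery (Theorems~\ref{th_invtype2} and~\ref{th_morevar}) depends on it at every stage.

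Second, your descent step is not a proof but a hope. You correctly identify it as the crux, then say you ``expect'' a Boolean combination or a $(p,q)$-style argument will work. Nothing concrete is offered, and in particular you apply directionality to the wrong object: you want to bound the coheirs of $\tp(c/M)$, but the paper uses directionality to bound the coheirs of $\tp(b/M)$ itself. The actual argument runs as follows. By medium/small directionality one chooses a \emph{countable} family $Q$ of coheirs of $\tp(b/M)$ dense for every finite set of formulas, realises $\bigotimes_{s\in Q}s$ by a tuple $\bar b$, and builds a countable $M_1$ with the $\bullet_0$--$\bullet_3$ properties relative to $\bar b$. Theorem~\ref{th_morevar} (whose proof is an induction on $|x|$ exploiting dp-minimality through the one-variable Theorem~\ref{th_invtype2} and the $\Gamma$-good machinery) then produces $a\models\phi(x;b)$ with $\tp(a/N)$ $M$-invariant and commuting over $M_1$ with every coheir extending some $\tp(b_s/M_1)$. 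Density of $Q$ upgrades this to commutation over $M$ with \emph{every} coheir of $\tp(b/M)$, and Lemma~\ref{lem_comloc} converts that into the required $\theta(y)\in\tp(b/M)$. No Shelah decomposition and no descent of parameters is involved.
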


That this holds in any NIP theory was conjectured in \cite{ExtDef2}. This conjecture amounts to asking for a definable version of the $(p,q)$-theorem of finite combinatorics, as we will explain in Section \ref{sec_invtypes}.

In a subsequent paper \cite{SimStar} with Sergei Starchenko, we show that one can adapt the constructions given here to find definable types in dp-minimal theories with definable Skolem functions. With those hypothesis, we show the existence of a definable type extending any non-forking formula. In particular, this holds for $\mathbb Q_p$.

The second part of this paper studies dp-rank in dp-minimal theories. Little is known about dp-rank in general, apart from the fact that it is sub-additive (\cite{KOU}). This implies that the dp-rank of an $n$ tuple in a dp-minimal theory has rank at most $n$. For that reason, we only work with finite ranks here, whereas in general the dp-rank can be an infinite cardinal (see e.g., \cite[Chapter 4]{NIPbook}). In \cite{witnessdp} we proved with Itay Kaplan that---after extending the base---the sequences $I_i$ in the definition of dp-rank can be taken to be sequences of realisations of $p$. The first main result of this part (Proposition \ref{prop_mainlemma}) is a strengthening of this for dp-minimal theories. In fact, the situation is as good as it could possibly be: the dp-rank of a tuple $(a_1,\ldots,a_n)$ can be witnessed by mutually indiscernible sequences of points, each of them starting with one of the $a_i$'s. As an immediate consequence, the property ``$\dpr(\bar a/A)\geq k$" is type-definable in $\bar a$.

Our second main result (Theorem \ref{th_rectangle}) says that dp-rank can be characterised without mentioning indiscernible sequences and implies, if $T$ eliminates $\exists^{\infty}$, that it is definable in families. Our theorems can be summarised as follows:

\begin{thm}\label{th_summary}
Let $T$ be dp-minimal, and for $\oplus_1$ assume elimination of $\exists^{\infty}$. We work only in real sorts.

$\oplus_0$ If $\acl$ satisfies exchange, then dp-rank coincides with $\acl$-dimension and this happens if and only if dp-rank is additive.

$\oplus_1$ For every formula $\phi(\bar x;\bar y)$ and integer $k$, the set of parameters $\bar b$ for which $\dpr(\phi(\bar x;\bar b))=k$ is definable.

$\oplus_2$ Let $\bar a$ be a tuple and $A$ a set of parameters. Then there is a formula $\phi(\bar x)\in \tp(\bar a/A)$ such that $\dprk(\phi(\bar x))=\dprk(\bar a/A)$.

$\oplus_3$ The formula $\phi(x_0,\ldots,x_{n-1})$ has dp-rank $n$ if and only if there are formulas $\theta_k(x_k)$ of dp-rank 1 and a formula $\psi(x_0,\ldots,x_{n-1})$ of dp-rank $<n$ such that $\phi(\bar x)$ contains the definable set $\bigwedge_{k<n} \theta_k(x_k) \setminus \psi(\bar x)$. In fact, $\psi(\bar x)$ can even be taken as a \emph{hypersurface} (meaning that when we project on the first variable, all fibers have dp-rank $<n-1$).

$\oplus_{3'}$ Assume that $T$ has no non-realised generically stable type. Then $\oplus_3$ holds with $\psi(\bar x)=\bot$.

$\oplus_4$ The formula $\phi(x_0,\ldots,x_{n-1})$ has dp-rank $\geq k$ if and only if its projection to some $k$ variables has dp-rank $k$.
\end{thm}

Note the following consequence: in dp-minimal theories, one can give an alternative, equivalent definition of dp-rank as follows. A formula has dp-rank at least $1$ if and only if it is infinite. Then inductively, a formula $\phi(x_0,\ldots,x_{n-1})$ in $n$ free variables has dp-rank $n$ if and only if $\oplus_3$ is satisfied. Finally using $\oplus_4$, a general formula $\phi(x_0,\ldots,x_{n-1})$ has dp-rank $k$, where $k\leq n$ is maximal such the projection of $\phi$ to some $k$ variables has dp-rank $k$.

Elimination of $\exists^{\infty}$ is necessary for $\oplus_1$ to hold. Without it, we still can show a weaker statement: the set of parameters $\bar b$ such that $\dpr(\phi(\bar x;\bar b))> k$ is type-definable.

It is proved in \cite{dpmin} that in a dp-minimal divisible ordered group, any infinite definable set in dimension 1 has non-empty interior. We then deduce from $\oplus_{3'}$ that under the same hypothesis, any definable set in $n$ variables has dp-rank $n$ if and only if it has non-empty interior. We will in fact prove this directly in Section \ref{sec_linear}.

Finally, notice that the hypersurface $\psi(\bar x)$ in $\oplus_3$ is necessary in general. For example, in the theory of pure equality the formula $x_0\neq x_1$ has dp-rank 2 but does not contain a rectangle.

%

\medskip
\textbf{Acknowledgements} Thanks to Sergei Starchenko for motivating me to work on Conjecture \ref{conj_main}. Thanks also to Itay Kaplan for reading some previous versions of this paper. Finally, many thanks to the referee for a careful reading of the paper and a number of useful comments and corrections.

\section{Setting and basic facts}

Throughout, $T$ is a complete theory, which we do not always assume to be NIP and $\monster$ is a monster model. We write $M \pprec N$ to mean $M\prec N$ and $N$ is $|M|^+$-saturated.

The notation $\phi^0$ means $\neg \phi$ and $\phi^1$ means $\phi$.

If $M\pprec N$ and $p\in S(N)$, then $p$ is $M$-invariant if for any $b,b'\in N$ and any formula $\phi(x;y)$, $b\equiv_M b'$ implies $p\vdash \phi(x;b) \leftrightarrow \phi(x;b')$. If $N$ is not specified, we mean $N=\monster$. If $M$ is omitted, we mean ``for some $M$ such that $M\pprec N$". Any $M$-invariant type over $N$ extends in a unique way to a global $M$-invariant type. Thus there is no harm in considering only global invariant types.

Let $I=( a_i:i\in \ordI)$ be any sequence. We define the \emph{Ehrenfeucht-Mostowski type} (or \emph{EM-type}) of $I$ over $A$ to be the set of $L(A)$-formulas $\phi( x_1,\ldots, x_n)$ such that $\monster \models \phi( a_{i_1},\ldots, a_{i_n})$ for all $i_1 < \cdots < i_n\in \ordI$, $n<\omega$. If $I$ is an indiscernible sequence, then for every $n$, the restriction of the EM-type of $I$ to formulas in $n$ variables is a complete type over $A$. If $I$ is any sequence and $\ordJ$ is any infinite linear order, then using Ramsey's\index{Ramsey} theorem and compactness, we can find an indiscernible sequence $J$ indexed by $\ordJ$ and realising the EM-type of $I$ (see \cite[Lemma 5.1.3]{TentZieg}).

\smallskip
As usual, we say that two types $p,q\in S(N)$ are \emph{weakly orthogonal} if $p(x)\cup q(y)$ defines a complete type in two variables over $N$. If $p$ and $q$ are $M$-invariant types, we say they are \emph{orthogonal} if they are weakly orthogonal as global types. Note that this implies that $p|_N$ and $q|_N$ are weakly orthogonal for any $N$ such that $M\pprec N$.

An important notion in this work is that of commuting types. If $p(x)$ and $q(y)$ are two global invariant types, then $p(x)\otimes q(y)$ denotes $\tp(a,b/\monster)$ where $b\models q$ and $a\models p|\monster b$. We say that $p$ and $q$ \emph{commute} if $p(x)\otimes q(y) = q(y) \otimes p(x)$. We say that $p$ and $q$ commute over $M_1$ if $p(x)\otimes q(y)|_{M_1} = q(y) \otimes p(x)|_{M_1}$. Note that by associativity of $\otimes$, if $p$ and $q$ commute, then $p$ commutes with $q^{(n)}=q\otimes \cdots \otimes q$.

The following observation will be used frequently: Assume that $p$ and $q$ are $M$-invariant global types. Let $M\pprec N$. Build successively $b\models q|_N$, $a\models p|_{Nb}$ and $I$ a Morley sequence of $q$ over $Nab$. Then the sequence $b+I$ is indiscernible over $Na$ if and only if $p$ and $q$ commute.

Of course, if $p$ and $q$ are orthogonal, then they commute. In NIP theories, we can consider commuting as a kind of weak form of orthogonality. This may seem exaggerated since for example a type may commute with itself, but it turns out to be a useful intuition. It is also motivated by the study of distal theories (see \cite{distal}) where in fact the two notions coincide (and this can be taken as a definition of distal theories amongst NIP theories).

\smallskip
Recall that, in an NIP theory, a global invariant type $p$ is \emph{generically stable} if it is both definable and finitely satisifiable over a small model. This is equivalent to saying that $p$ commutes with itself: $p(x_1)\otimes p(x_2)=p(x_2)\otimes p(x_1)$ (see \cite[Theorem 2.29]{NIPbook}).

\smallskip
We recall also the notion of strict non-forking from \cite{CherKapl}. Let $M$ be a model of an NIP theory. A sequence $(b_i)_{i<\omega}$ is strictly non-forking over $M$ if for each $i<\omega$, $\tp(b_i/b_{<i}M)$ is strictly non-forking over $M$ which means that it extends to a global type $\tp(b_*/\monster)$ such that both $\tp(b_*/\monster)$ and $\tp(\monster/Mb_*)$ are non-forking over $M$. We will only need to know two facts about strict non-forking sequences:

\smallskip
(Existence) Given $b\in \monster$ and $M\models T$, there is a sequence $b=b_0,b_1,\ldots$ which is strictly non-forking over $M$. We might call such a sequence a \emph{strict Morley sequence} of $\tp(b/M)$.

\smallskip
(Witnessing property) If the formula $\phi(x;b)$ forks over $M$, then for any strictly non-forking sequence $b=b_0,b_1,\ldots$, the type $\{\phi(x;b_i):i<\omega\}$ is inconsistent.

\smallskip
If $\phi(x;y)$ is an NIP formula, we let $\alt(\phi)$ be the \emph{alternation number} of $\phi$, namely the maximal $n$ for which there is an indiscernible sequence $(b_i:i<\omega)$ and a tuple $a$ with $\neg (\phi(a;b_i)\leftrightarrow \phi(a;b_{i+1}))$ for all $i<n$. If $(b_i:i<\omega)$ is indiscernible and $\{\phi(x;b_i):i<\alt(\phi)/2+1\}$ is consistent, then $\{\phi(x;b_i):i<\omega\}$ is also consistent.

\medskip

\subsection{Dp-rank and dp-minimality}\label{sec_dprankintro}

Sequences $(I_i:i<k)$ are said to be \emph{mutually indiscernible} over $A$ if each $I_i$ is indiscernible over $A\cup I_{\neq i}$.

We briefly recall the definition of dp-rank and refer the reader to \cite[Chapter 4]{NIPbook} for more information. We will only need to consider finite dp-ranks, hence we restrict our definition to this case.

\begin{defi}
Let $\pi(x)$ be a partial type over some set $A$ and $n<\omega$. Then $\pi(x)$ is of dp-rank $\leq n$ if for any $a\models \pi(x)$ and any $n+1$ sequences $I_0,\ldots,I_{n}$ mutually indiscernible over $A$, there is $k\leq n$ such that $I_k$ is indiscernible over $Aa$.

The partial type $\pi$ is of dp-rank $n$ (written $\dpr(\pi(x))=n$) if it is of dp-rank $\leq n$, but not $\leq n-1$.
\end{defi}

One can check that this definition does not depend on the choice of $A$ over which $\pi(x)$ is defined.

We will often write $\dpr(a/A)$ instead of $\dpr(\tp(a/A))$.

If $\pi(x)$ is a partial type over $A$, then the definition implies immediately that $\dpr(\pi(x))=\max_p \dpr(p)$ where $p$ ranges over all complete $A$-types extending $\pi(x)$.

\begin{prop}[\cite{KOU}]\label{prop_addit}
Dp-rank is sub-additive: for any $a, b$ and $A$, we have $\dpr(a, b/A)\leq \dpr( a/A b)+\dpr( b/A)$.
\end{prop}

Equality need not hold as we will see in Section \ref{sec_dprank}. However, it is always the case that $\dpr(\phi( x)\wedge \psi( y))=\dpr(\phi( x))+\dpr(\psi( y))$, where $ x$ and $ y$ are disjoint tuples of variables.

A theory $T$ is \emph{dp-minimal} if every one-type has dp-rank at most 1. Equivalently, $T$ is dp-minimal if for every set $A$, infinite sequences $I_0,I_1$ of tuples and every element $a$, if $I_0$ is indiscernible over $AI_1$ and $I_1$ is indiscernible over $AI_0$, then either $I_0$ or $I_1$ is indiscernible over $Aa$. By Proposition \ref{prop_addit}, every $n$-tuple $\bar a$ has dp-rank at most $n$ (over any set $A$).

Any dp-minimal theory is NIP.

\section{Invariant types}\label{sec_invtypes}
Our guiding conjecture in this section is the following which first appeared in \cite{ExtDef2}.

\begin{conjecture}\label{conj_main}
Let $T$ be NIP and $M\models T$. Let $\phi(x;d)\in L(\monster)$ be a formula, non-forking over $M$. Then there is $\theta(y)\in \tp(d/M)$ such that the partial type $\{ \phi(x;d') : d'\in \theta(\monster)\}$ is consistent.
\end{conjecture}

\noindent
First, a few basic observations:

$\cdot$ As $\phi(x;d)$ does not fork over $M$, it extends to some $M$-invariant type $p$. (Recall that in NIP theories, non-forking and invariance are the same over models, \cite{CherKapl}.)

$\cdot$ If $p$ is finitely satisfiable, then in particular, $\phi(x;d)$ has a solution $a$ in $M$. Then we can take $\theta(y)=\phi(a;y)$. In this case, the formula $\phi(x;d)$ also extends to the definable type $x=a$.

$\cdot$ If $p$ is definable, then we may take $\theta(y)$ to be the $\phi$-definition of $p$.

\smallskip \noindent
Hence the interesting case is when $p$ is neither definable nor finitely satisfiable. This is where the ideas mentioned in the introduction become useful.

\subsubsection*{The $(p,q)$-theorem}

We note that this conjecture can be seen as a definable version of the $(p,q)$-theorem from finite combinatorics; the statement of which we recall now. Let $\phi(x;y)$ be a formula. We define the dual VC-dimension of $\phi(x;y)$ as the maximal $n<\omega$ (if it exists) for which there are tuples $b_0,\ldots,b_{n-1}$ and $(a_C : C\subseteq n)$ such that $$\models \phi(a_C;b_k) \iff k\in C.$$

If such a maximal $n$ does not exist, we say that $\phi(x;y)$ has infinite dual VC-dimension. A formula has finite dual VC-dimension if and only if it is NIP (see \cite[Chapter 6]{NIPbook}).

\begin{fait}[$(p,q)$-theorem]\label{fact_pq}
Given integers $p\geq q$, there is an integer $n$ such that the following holds.
Let $\phi(x;y)$ be a formula of dual VC-dimension $<q$ and let $W\subseteq M^{|y|}$ be the set of tuples $b$ for which $\phi(x;b)$ is not empty. Let $Y\subset W$ be finite and assume that
for every $Y_0\subseteq Y$ of size $p$, we can find $Y_1\subseteq Y_0$ of size $q$ and $a\in M$ such that $Y_1\subseteq \phi(a;M)$,
 \underline{then} there are $a_0,\dots,a_{n-1}$ such that $Y\subseteq \bigvee_{i<n} \phi(a_i;M)$.
\end{fait}

See \cite[Chapter 6]{NIPbook} for more details and for a proof of a special case. This theorem was used in \cite{ExtDef2} to prove uniformity of honest definitions. We refer the reader to \cite{pq} for the original proof.

Let $T$ be NIP and $M\models T$. Assume that $\phi(x;d)\in L(\monster)$ does not fork over $M$. By lowness (see \cite[Proposition 5.38]{NIPbook}), there is $\psi(y)$ in $L(M)$ such that for any $d'\models \psi(y)$, the formula $\phi(x;d')$ does not fork over $M$. Let $W\subseteq S_y(M)$ be the set of types containing the formula $\psi(y)$. As noted in \cite[Proposition 25]{ExtDef2}, the $(p,q)$-theorem implies that we can write $W=\bigcup_{i<n} W_i$ such that for each $i<n$, $\{\phi(x;d'): d'\in \monster, \tp(d'/M)\in W_i\}$ is consistent (and thus does not fork over $M$).

Conjecture \ref{conj_main} and compactness imply that we can choose the sets $W_k$ to be clopens. In fact, the converse is also true: if we can choose the $W_k$'s to be definable, then Conjecture \ref{conj_main} follows since $\tp(b/M)$ must lie in one of them.

\smallskip
Finally, note that it is enough to prove the conjecture when $T$ is countable, because we can restrict to a countable $T$ containing $\phi(x;y)$. Then we can also assume that $M$ is countable: if it is not, we can replace it with a countable submodel over which $\phi(x;d)$ does not fork.

\subsection{Recognising definable types}

%
%

The following lemma holds in any theory and is the key to identifying definable types.

\begin{lemme}\label{lem_def}
An $M$-invariant type $p(x)$ is definable if and only if for every $M$-finitely satisfiable type $q(y)$, $p(x)\otimes q(y)|_M=q(y)\otimes p(x)|_M$.
\end{lemme}
\begin{proof}
If $p$ is definable, then it is known (and easy to see) that it commutes with every finitely satisfiable type (see \cite[Lemma 2.23]{NIPbook}). Conversely, assume that $p$ commutes with every $M$-finitely satisfiable type as in the statement of the proposition. We first show that $p$ is an heir of its restriction to $M$. Assume that this is not the case. Then there is $\phi(x;y)\in L(M)$ and $d\in \monster$ such that $p\vdash \phi(x;d)$ and for all $b\in M$, $p\vdash \neg \phi(x;b)$. Let $q$ be any global coheir of $\tp(d/M)$. Then $p(x)\otimes q(y)\vdash \phi(x;y)$ by construction, but necessarily, $q(y)\otimes p(x)\vdash \neg \phi(x;y)$. This contradicts the hypothesis.

To conclude it is now enough to show that $p|_M$ has a unique heir to $\monster$. Let $p_0,p_1$ be two global heirs of $p|_M$. If $p_0\neq p_1$, then for some formula $\phi(x;b)\in L(\monster)$, we have $p_0\vdash \phi(x;b)$ and $p_1\vdash \neg \phi(x;b)$. Let $a_0\models p_0|_{Mb}$ and $a_1\models p_1|_{Mb}$. By the heir property, we know that both $\tp(b/Ma_0)$ and $\tp(b/Ma_1)$ are finitely satisfiable in $M$. Let $q_0$ (resp. $q_1$) be a global extension of $\tp(b/Ma_0)$ (resp. $\tp(b/Ma_1)$) which is finitely satisfiable in $M$. As both $a_0$ and $a_1$ realise $p|_M$, we have $q_0(y)\otimes p(x)\vdash \phi(x;y)$ whereas $q_1(y)\otimes p(x)\vdash \neg \phi(x;y)$. But as $q_0|_M=q_1|_M$, we have $p(x)\otimes q_0(y)\vdash \phi(x;y) \iff p(x)\otimes q_1(y)\vdash \phi(x;y)$. We get a contradiction to the commutativity hypothesis.
\end{proof}

In the study of the dp-minimal case, we will work by induction on the number of variables. Hence the following will be useful.

\begin{lemme}\label{lem_indfs}
Let $T$ be NIP. Let $\phi(x,y;d)\in L(\monster)$ and $M\models T$ such that $\phi(x,y;d)$ does not fork over $M$. Assume that there are $(a,b)\models \phi(x,y;d)$, $\tp(a,b/\monster)$ is $M$-invariant and $\tp(b/\monster)$ is finitely satisfiable in $M$. Then there is $b_0 \in M$ such that $\phi(x;b_0,d)$ does not fork over $M$.
\end{lemme}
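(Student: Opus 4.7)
I would prove this by producing, from the given $(a,b)$, a candidate $b_0\in M$ via finite satisfiability, and then verify non-forking of $\phi(x;b_0,d)$ using the witnessing property for a strict Morley sequence of $\tp(d/M)$, with NIP's finite alternation number bridging the finite-to-infinite gap.

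More concretely, set $p:=\tp(a,b/\monster)$, which is $M$-invariant. First, using the existence property of strict Morley sequences, pick a strict Morley sequence $(d_i)_{i<\omega}$ of $\tp(d/M)$ with $d_0=d$. Since every $d_i\equiv_M d$ and $\phi(x,y;d)\in p$, $M$-invariance gives $\phi(x,y;d_i)\in p$ for all $i$. Realize the partial type $p|_{M\cup\{d_i:i<\omega\}}$ by some $(a^*,b^*)$; then $\phi(a^*,b^*;d_i)$ holds for every $i$.

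Next, observe that $\tp(b^*/M\cup\{d_i\})$ is the restriction of $\tp(b/\monster)$ to $M\cup\{d_i\}$ (this is the $y$-reduct of $p|_{M(d_i)}$), hence it is finitely satisfiable in $M$. Let $n:=\lfloor\alt(\phi(x;y,z))/2\rfloor+1$. The formula $\exists x\,\bigwedge_{i<n}\phi(x;y,d_i)$ belongs to $\tp(b^*/M(d_i))$ since $a^*$ witnesses it, so by finite satisfiability there is $b_0\in M$ with $\exists x\,\bigwedge_{i<n}\phi(x;b_0,d_i)$. Thus $\{\phi(x;b_0,d_i):i<n\}$ is consistent. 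Since $(d_i)$ is indiscernible over $M$ and $b_0\in M$, it remains indiscernible over $Mb_0$, and the alternation bound recalled in the setup promotes consistency of this finite piece to consistency of $\{\phi(x;b_0,d_i):i<\omega\}$.

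Finally, because $b_0\in M$, the sequence $(b_0,d_i)_{i<\omega}$ is a strict Morley sequence of $\tp(b_0,d/M)$: strict non-forking of $\tp(d_*/\monster)$ transfers to $\tp(b_0,d_*/\monster)$ as $b_0$ is already in $M$. Consistency of $\{\phi(x;b_0,d_i):i<\omega\}$ together with the witnessing property (in its contrapositive form) then forces $\phi(x;b_0,d)$ to be non-forking over $M$. The delicate point is the passage from a single witness $(a^*,b^*)$ living above $M\cup\{d_i\}$ to a uniform parameter $b_0\in M$; this is precisely where the finite satisfiability hypothesis on $\tp(b/\monster)$ does the work, combined with NIP to remove the bound $n$.
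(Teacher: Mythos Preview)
Your proof is correct and follows essentially the same route as the paper's: choose a strict Morley sequence $(d_i)$ of $\tp(d/M)$, use $M$-invariance of $p$ to see that $b$ (or your $b^*$) satisfies $(\exists x)\bigwedge_{i<n}\phi(x,y;d_i)$, pull $b_0\in M$ from finite satisfiability, and conclude via the alternation bound plus the witnessing property. Your write-up is a bit more explicit in places (e.g.\ spelling out why $(d_i)$ stays indiscernible over $Mb_0$, and treating $(b_0,d_i)$ as a strict Morley sequence of $\tp(b_0,d/M)$ rather than simply absorbing $b_0$ into the language), but the ideas and the order of steps match the paper exactly.
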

\begin{proof}
Let $(d_i :i<\omega)$ be a strict Morley sequence of $\tp(d/M)$. Let $n$ be larger than the alternation number of $\phi(x;y)$. Then $b$ satisfies the formula $(\exists x)\bigwedge_{k<n} \phi(x,y;d_k)$. Therefore there is $b_0\in M$ satisfying the same formula. We claim that $\phi(x;b_0,d)$ does not fork over $M$. Indeed, there is $a'$ such that $\bigwedge_{k<n} \phi(x;b_0,d_k)$ holds. By hypothesis on $n$, this implies that the type $\{\phi(x;b_0,d_i) : i<\omega\}$ is consistent and therefore $\phi(x;b_0,d)$ does not fork over $M$.
\end{proof}

\begin{cor}
Assume that $T$ is NIP. If all invariant 1-types are finitely satisfiable in a small model, then all invariant types are.
\end{cor}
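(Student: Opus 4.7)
The plan is to proceed by induction on the arity $n$ of the type $p$, using Lemma \ref{lem_indfs} as the engine to peel off one variable at a time. The base case $n=1$ is precisely the hypothesis. For the inductive step, suppose the statement holds for invariant types of arity $<n$, and let $p(x_1,\ldots,x_n)$ be an $M$-invariant type; I will show that an arbitrary formula $\phi(x_1,\ldots,x_n;d)\in p$ has a realisation in $M$, which by arbitrariness of $\phi$ gives finite satisfiability of $p$ in $M$.

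Fix a realisation $(a_1,\ldots,a_n)\models p$. The projection $\tp(a_1/\monster)$ is an $M$-invariant $1$-type (as it factors through $p$), hence by hypothesis is finitely satisfiable in $M$. Now view $\phi$ as $\phi'(x_2,\ldots,x_n;x_1,d)$, with free variable $\bar x:=(x_2,\ldots,x_n)$ and distinguished parameter $x_1$. Since $p$ is $M$-invariant, $\phi'$ does not fork over $M$ (recall that in NIP, invariance over a model equals non-forking), and the tuple $(a_2,\ldots,a_n;a_1)$ realises $\phi'$ with $M$-invariant full type and with $x_1$-projection finitely satisfiable in $M$. Applying Lemma \ref{lem_indfs} (with the roles $x\leftrightarrow\bar x$ and $y\leftrightarrow x_1$), we obtain some $b_1\in M$ such that the formula $\phi(b_1,x_2,\ldots,x_n;d)$ does not fork over $M$.

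By NIP, this non-forking formula in $n-1$ variables extends to some $M$-invariant type $q(\bar x)$. The induction hypothesis applied to $q$ yields that $q$ is finitely satisfiable in $M$, so $\phi(b_1,\bar x;d)\in q$ has a witness $(b_2,\ldots,b_n)\in M^{n-1}$; then $(b_1,b_2,\ldots,b_n)\in M^n$ realises $\phi$, as required.

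I do not expect a substantive obstacle: the content of the argument is already packaged into Lemma \ref{lem_indfs}, and the induction is standard. The one point warranting care is the consistent interpretation of ``finitely satisfiable'' across the induction, but this is automatic since every invariant type produced in the argument, namely $\tp(a_1/\monster)$, the extension $q$, and the target $p$, is $M$-invariant for the \emph{same} $M$, so the hypothesis and the inductive conclusion can be invoked over that fixed model.
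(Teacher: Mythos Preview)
Your proof is correct and is precisely the argument the paper has in mind: the corollary is stated without proof immediately after Lemma~\ref{lem_indfs}, and the intended derivation is exactly this induction on arity, using the lemma to strip off one finitely satisfiable coordinate and the NIP equivalence of non-forking and invariance over models to re-enter the inductive hypothesis. There is nothing to add.
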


Now to show Conjecture \ref{conj_main} by induction it would be enough to consider the case where for every $M$-invariant type extending $\phi(\bar x;d)$ none of the induced one-types are finitely satisfiable.

\subsection{One variable}

\begin{lemme}\label{lem_techprem}
Let $B\subset \monster$ and let $a\in \monster$ be a tuple such that $\dprk(a/B)=n$. Let $\bar b_1,\ldots, \bar b_n$ in $\monster$ be infinite sequences, mutually indiscernible over $B$, none of which is indiscernible over $Ba$. Let $\phi(x;y)\in L$, $|x|=|a|$.

Then there are formulas $\psi(x)\in \tp(a/B\bar b_1..\bar b_n)$ and $\theta_l(y)\in L(B\bar b_1..\bar b_n)$ $l=0,1$, such that:

$\bullet_0$ for each $b\in B^{|y|}$, one of $\theta_0(b)$ or $\theta_1(b)$ holds;

$\bullet_1$ for $l=0,1$, $\monster \models \theta(y)\wedge \psi_l(x) \rightarrow \phi^{l}(x;y)$.
\end{lemme}
\begin{proof}
Let $r\in S_y(B\bar b_1..\bar b_n)$ be finitely satisfiable in $B$. Let $r'$ be any global extension of $r$ to a type finitely satisfiable in $B$. Let $l=l(r') \in \{0,1\}$ be such that $r'\models \phi^l(a;y)$.

Assume that we can find $c\models r$ such that $\models \neg\phi^{l}(a;c)$. Then let $J$ be a Morley sequence of $r'$ over everything and $\bar c=c+J$. The sequences $\bar b_1,\ldots,\bar b_n,\bar c$ are mutually indiscernible over $B$ and none of them is indiscernible over $Ba$. This contradicts the fact that $\dpr(a/B)=n$. Thus by compactness, there are $\theta_r(y)\in r$ and $\psi_r(x)\in \tp(a/B\bar b_1..\bar b_n)$ such that $\models \theta_r(y)\wedge \psi_r(x) \impl \phi^{l}(x;y)$. In particular, $l$ depends only on $r$, not on $r'$, and we can write $l=l(r)$.

Let $S\subset S_y(B\bar b_1..\bar b_n)$ be the set of types finitely satisfiable in $B$. It is a closed set, thus compact and contains all types realised in $B$. We can extract from the family $\{\theta_r(y) : r\in S\}$ a finite subcover $\{\theta_r(y) : r\in S^*\}$. For $l=0,1$, let $S^*_l = \{r \in S^* : l(r)=l\}$ and define $\theta_l(y)=\bigvee_{r\in S^*_l} \theta_r(y)$. Also define $\psi(x)=\bigwedge_{r\in S^*} \psi_r(x)$.

We have that $\theta_0,\theta_1$ cover $S$, in particular, $\theta_0(B)\cup \theta_1(B)=B^{|y|}$. Also for $l=0,1$, $\psi(x)\in \tp(a/B\bar b_1..\bar b_n)$ and $\monster \models \theta_l(y)\wedge \psi(x)\impl \phi^{l}(x;y)$.
\end{proof}

\begin{lemme}\label{lem_tech}
Let $M\prec^+N_1 \prec^+ N$ and let $a\in \monster$ be a tuple such that $\dprk(a/N)=n$. Let $\bar b_1,\ldots, \bar b_n$ in $\monster$ be infinite sequences, mutually indiscernible over $N$, none of which is indiscernible over $Na$. Assume also that $\tp(a\bar b_1..\bar b_n/N)$ is $M$-invariant. Then \[\tp(a/N_1\bar b_1..\bar b_n)\vdash \tp(a/N).\]

More precisely, given $\phi(x;y)\in L$, $|x|=|a|$, there are formulas $\theta_l(y)\in L(N_1\bar b_1..\bar b_n)$ ($l=0,1$) and $\psi(x)\in \tp(a/N_1\bar b_1..\bar b_n)$ such that:

$\bullet_0$ for each $b\in N^{|y|}$, one of $\theta_0(b)$ or $\theta_1(b)$ holds;

$\bullet_1$ for $l=0,1$, $\monster \models \theta_l(y)\wedge \psi(x) \rightarrow \phi^{l}(x;y)$.

\end{lemme}
\begin{proof}
Let $\psi(x)$, $\theta_l(y)$ be given by Lemma \ref{lem_techprem} with $B=N$.

Write $\theta_l(y)=\theta_l(y;\bar b_1,\ldots,\bar b_n, e)$ and $\psi(x)=\psi(x;\bar b_1,\ldots,\bar b_n, e)$ with $e\in N$. As $\tp(a\bar b_1..\bar b_n/N)$ is $M$-invariant, we may replace $e$ by any $e' \equiv_{M} e$. In particular, we may assume that $e\in N_1$. This gives what we want.
\end{proof}

Our first theorem is stated for a type of dp-rank 1 in an arbitrary theory.

\begin{thm}\label{th_invtype2}
($T$ any theory) Let $p$ be a global $M$-invariant type of dp-rank 1. Then $p$ is either finitely satisfiable or definable.
\end{thm}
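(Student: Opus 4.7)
I would argue by contradiction, assuming $p$ is $M$-invariant, of dp-rank 1, and neither finitely satisfiable nor definable, and producing two mutually $M$-indiscernible sequences, neither of which is indiscernible over $Ma$ for some $a\models p$; this contradicts dp-rank 1. By Lemma~\ref{lem_def}, the failure of definability yields a type $q$ finitely satisfiable in $M$ that does not commute with $p$, together with a formula $\phi(x;y)$ with $p\otimes q\vdash\phi$ and $q\otimes p\vdash\neg\phi$. The standard non-commutation construction then provides the first sequence: choose $b_0\models q|_M$, $a\models p|_{Mb_0}$, and let $I=(b_i)_{i\geq 1}$ be a Morley sequence of $q$ over $Mab_0$. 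Then $J_0:=(b_0,b_1,b_2,\ldots)$ is a Morley sequence of $q$ over $M$ and hence $M$-indiscernible; yet $\phi(a;b_0)$ holds (from $p\otimes q\vdash\phi$, using $b_0\models q|_M$ and $a\models p|_{Mb_0}$) while $\neg\phi(a;b_i)$ holds for every $i\geq 1$ (since then $b_i\models q|_{Ma}$, so $q\otimes p\vdash\neg\phi$ applies), so $J_0$ is not indiscernible over $Ma$.

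For the second sequence, I would use that $p$ is not finitely satisfiable: fix $\phi_0(x;d_0)\in p$ with $\phi_0(M;d_0)=\emptyset$, and let $r$ be a global coheir of $\tp(d_0/M)$ chosen so that $\neg\phi_0(a;y)\in r|_{Ma}$. The plan is to enrich the previous construction by pairing each $b_i$ with a $d_i$, realizing $((b_i,d_i))_{i\geq 0}$ as a Morley sequence over $M$ of a single $M$-invariant joint type combining $q$ and $r$. When this is arranged correctly, mutual $M$-indiscernibility of the projections $J_0=(b_i)$ and $J_1=(d_i)$ follows; meanwhile $\phi_0(a;d_0)$ holds by $M$-invariance of $p$ (with $a$ realizing $p$ above $d_0$) while $\neg\phi_0(a;d_i)$ holds for $i\geq 1$ by the chosen coheir property, so $J_1$ is also not indiscernible over $Ma$. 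Combined with $J_0$, this contradicts dp-rank 1.

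The main obstacle is twofold. First, one must find a coheir $r$ of $\tp(d_0/M)$ with $\neg\phi_0(a;y)\in r|_{Ma}$, equivalently that $\tp(d_0/M)\cup\{\neg\phi_0(a;y)\}$ is finitely satisfiable in $M$. If this fails, then some formula $\theta(y)\in\tp(d_0/M)$ satisfies $\theta(M)\subseteq\phi_0(a;M)$, and by $M$-invariance of $\tp(a/M)$ this holds uniformly for every $a\models p|_M$; the exceptional case must be handled separately, either by a better choice of $\phi_0,d_0$ or by directly showing that the constraint forces definability of $p$ via Lemma~\ref{lem_def2}. Second, mutual $M$-indiscernibility of the projections $J_0,J_1$ does not follow automatically from $M$-indiscernibility of the pair sequence; one may need to follow the construction with a Ramsey-style extraction that preserves the alternation patterns witnessing $Ma$-non-indiscernibility of both projections.
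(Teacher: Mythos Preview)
Your construction of $J_0$ is correct, but the argument for the second sequence does not go through, and neither obstacle you flag is merely technical.

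On the first obstacle: the condition $\phi_0(M;d_0)=\emptyset$ says that $\neg\phi_0(m;d_0)$ holds for all $m\in M$; what you need is that $\{\neg\phi_0(a;y)\}\cup\tp(d_0/M)$ is finitely satisfiable in $M$, i.e.\ that $\neg\phi_0(a;m')$ holds for suitable $m'\in M$. These involve opposite variables and there is no implication from one to the other. In the exceptional case you isolate a single $\theta(y)\in\tp(d_0/M)$ with $\theta(M)\subseteq\phi_0(a;M)$, but Lemma~\ref{lem_def2} requires commutation with \emph{all} coheirs for \emph{all} formulas, and there is no mechanism to promote this local statement to definability. In effect, what you need from $r$ is a second witness to non-commutation with $p$, and non--finite-satisfiability does not produce one.

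On the second obstacle: projections of a Morley sequence of a product type are essentially never mutually indiscernible (take $q=r$ the type at $+\infty$ in DLO; the two coordinate sequences interleave). A Ramsey extraction over $M$ destroys the specific truth-value alternations with the fixed element $a$; extracting over $Ma$ makes both sequences $Ma$-indiscernible, which is the opposite of what you want. Since $q$ and $r$ are both finitely satisfiable in $M$ but need not commute, I do not see how to arrange mutual indiscernibility while keeping $a$ fixed.

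The paper proceeds differently and avoids both issues. It assumes only that $p$ is not definable, fixes the resulting sequence $\bar b$ (your $J_0$) once and for all, and then shows directly that $p$ is finitely satisfiable by proving that an \emph{arbitrary} formula $\phi(x;d)\in p$ has a point in $M$. The dp-rank~1 hypothesis is not used to manufacture a global contradiction, but to prove an isolation claim: $\tp(a/\bar b\,N_1)\vdash\tp_\phi(a/N)$, where $M\pprec N_1\pprec N$ and $\tp(N_1/Md)$ is finitely satisfiable in $M$. For this, the ``second sequence'' $\bar c$ is built from a type $r$ finitely satisfiable in the large saturated $N$ (not in $M$); mutual indiscernibility of $\bar b$ and $\bar c$ over $N$ is then automatic, because $\tp(\bar c/N\bar b)$ is finitely satisfiable in $N$ and hence $\bar b$ remains indiscernible over $N\bar c$. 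If some realization of $r$ gave the wrong $\phi$-value one would contradict dp-rank~1, so each such $r$ is determined; compactness of the space of $N$-finitely-satisfiable types yields formulas $\theta_l(y)$, $\psi_l(x)$ over $N_1\bar b$ witnessing the claim. Finally, finite satisfiability of $\tp(\bar b/N)$ in $M$ lets one replace $\bar b$ by a tuple $\bar b_0\in M$, and finite satisfiability of $\tp(N_1/Md)$ in $M$ then pulls a realization of $\phi(x;d)$ down into $M$. The asymmetry between $\bar b$ (built from a coheir over $M$) and $\bar c$ (built from a coheir over $N$) is exactly what makes mutual indiscernibility free, and working formula by formula removes any need for a special coheir of $\tp(d/M)$.
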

\begin{proof}
Assume that $p$ is not definable. Then there is a global type $q$ finitely satisfiable in $M$ such that $p$ does not commute with $q$. Take $N\succ M$ sufficiently saturated. Let $\phi(x;y)\in L$, $d\in N$ such that $\phi(x;d)\in p$.

Let $(a,b)\models p\otimes q|_N$, then let $I$ be a Morley sequence of $q$ over $Nab$ and let $\bar b=b+I$. The sequence $\bar b$ is indiscernible over $N$, but not over $Na$. Let $M\pprec N_1 \pprec N$ with $\tp(N_1/Md)$ finitely satisfiable in $M$.

Apply Lemma \ref{lem_tech} to $a$, $N_1,N$ and $\bar b$, with $n=1$. The second part of the conclusion gives formulas $\theta_l(y),\psi(y) \in L(N_1\bar b)$. Write $\theta_l(y)=\theta_l(y;\bar b, e)$ and $\psi(x)=\psi(x;\bar b, e)$ with $e\in N_1$.

%
%
%
%
%
%

Since $\phi(x;d)\in p$, we know that the formula $\theta_1(d;\bar b, e)$ holds. As $\tp(\bar b/N)$ is finitely satisfiable in $M$, there is $\bar b_0\in M$ such that 
$$\bar b_0 \models \theta_1(d;\bar z, e)\wedge (\exists x)(\forall y)(\theta_1(y;\bar z, e)\impl \phi(x;y)).$$
Since $N_1$ is a model, there is $a_0 \in N_1$ such that $(\forall y)(\theta_1(y;\bar b_0,e)\rightarrow \phi(a_0;y))$ holds. In particular $\phi(a_0;d)$ holds. As $\tp(N_1/Md)$ is finitely satisfiable in $M$, we can find $a_0'\in M$ satisfying $\phi(x;d)$. As $\phi(x;d)$ was any formula in $p$, this proves that $p$ is finitely satisfiable in $M$.
\end{proof}

Hence if $T$ is dp-minimal, Conjecture \ref{conj_main} is proved for any formula $\phi(x;d)$, $|x|=1$.

\subsection{Two variables}

In this section and the next one, we assume for simplicity that $T$ is dp-minimal and try to deal with formulas in more than one variable.

The next proposition solves Conjecture \ref{conj_main} for formulas in two variables.

\begin{prop}\label{prop_dim2}
Assume that $T$ is dp-minimal. Let $\phi(x_1,x_2;d)\in L(\monster)$ be non-forking over $M$; $|x_1|=|x_2|=1$. Then there is $\theta(y)\in \tp(d/M)$ such that $\bigwedge_{d'\in \theta(\monster)} \phi(x_1,x_2;d')$ is consistent.
\end{prop}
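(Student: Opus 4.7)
The plan is to extend $\phi(x_1,x_2;d)$ to an $M$-invariant global type $p(x_1,x_2)\vdash\phi(x_1,x_2;d)$ and to analyse $p$ through its one-variable projections $q_i:=p|_{x_i}$. Each $q_i$ is an $M$-invariant type of dp-rank~$1$, so Theorem~\ref{th_invtype2} applies: each $q_i$ is either finitely satisfiable in $M$ or definable over $M$.

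Suppose first that one of the projections, say $q_2$, is finitely satisfiable in $M$. Applying Lemma~\ref{lem_indfs} with $x:=x_1$, $y:=x_2$ produces $a_2^0\in M$ such that $\phi(x_1,a_2^0;d)$ does not fork over $M$. This is a formula in the single variable $x_1$ with parameter tuple $(a_2^0,d)$, so the one-variable case of Conjecture~\ref{conj_main} (established right after Theorem~\ref{th_invtype2}) gives $\eta(y_2,z)\in\tp(a_2^0,d/M)$ with $\bigwedge_{(a',d')\models\eta}\phi(x_1,a';d')$ consistent. Since $a_2^0\in M$, the instance $\theta(z):=\eta(a_2^0,z)$ lies in $\tp(d/M)$, and a common realisation $a_1^*$ of the one-variable conjunction yields the witness $(a_1^*,a_2^0)$ for $\bigwedge_{d'\models\theta}\phi(x_1,x_2;d')$. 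The case where $q_1$ is finitely satisfiable is symmetric.

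In the remaining case, both $q_1$ and $q_2$ are definable; here I aim to show that $p$ itself is definable, whence $\theta(y):=d_p\phi$ supplies the desired formula. By Lemma~\ref{lem_def2} it suffices to check that $p$ commutes over $M$ with every global type $s(y)$ finitely satisfiable in $M$. Fix such an $s$, pick $N$ with $M\pprec N$ sufficiently saturated, and build $b\models s|_N$, $(a_1,a_2)\models p|_{Nb}$ and $I$ a Morley sequence of $s$ over $Nba_1a_2$. The observation in Section~2 applied to $(p,s)$ reduces the required commutation to indiscernibility of $\bar b:=b+I$ over $N(a_1,a_2)$. Since $I$ is in particular a Morley sequence of $s$ over each $Nba_i$, and since each $q_i$ commutes with $s$ by its definability together with \cite[Lemma~2.23]{NIP_book}, the same observation applied coordinate-wise gives that $\bar b$ is indiscernible over $Na_1$ and over $Na_2$ separately.

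The main obstacle is the final step: upgrading these two separate indiscernibilities to joint indiscernibility over $N(a_1,a_2)$, using only dp-minimality. My expected route is to argue by contradiction. Assuming joint indiscernibility fails, I would build a second sequence $\bar b'$ mutually indiscernible with $\bar b$ and realising the same failure, so as to exhibit two mutually indiscernible sequences that both cease to be indiscernible once a single coordinate $a_i$ is added to the base; this contradicts dp-rank~$1$ of $a_i$, in the spirit of the mutually-indiscernible-pair argument used in the Claim inside the proof of Theorem~\ref{th_invtype2}. If this step can be carried out, $p$ is definable and the proposition is proved; if not, one should instead adapt the extraction argument from the end of that proof to produce the formula $\theta$ directly, without going through global definability of $p$.
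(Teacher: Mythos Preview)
Your reduction when one projection $q_i$ is finitely satisfiable is correct and matches the paper. The gap is in the remaining case. Your proposed contradiction cannot work as stated: you have already established that $\bar b$ \emph{is} indiscernible over $Na_1$ and over $Na_2$ separately, so no sequence of the same kind will ``cease to be indiscernible once a single coordinate $a_i$ is added''. The failure you are assuming is only visible over the pair $(a_1,a_2)$, and two mutually indiscernible sequences both failing over $N a_1 a_2$ is perfectly compatible with dp-rank $\leq 2$ of the pair --- it yields no contradiction with dp-rank $1$ of either coordinate. So your argument does not establish that $p$ is definable, and the paper does not claim that it is.

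Instead, the paper exploits the extra information that $p_1=\tp(a_1/N)$ is definable but \emph{not} finitely satisfiable, hence not generically stable, hence does not commute with itself. One takes $(a_1 a_2, c_2, c_1)\models p\otimes q\otimes p_1$, where $q$ is a type finitely satisfiable in $M$ with which $p$ does not commute, and builds $\bar c_1$ from a Morley sequence of $p_1$ and $\bar c_2$ from a Morley sequence of $q$. These are mutually indiscernible over $N$ (a definable type commutes with a finitely satisfiable one), and \emph{neither} is indiscernible over $Na_1a_2$: $\bar c_1$ fails because $p_1$ does not commute with itself, $\bar c_2$ because $p$ does not commute with $q$. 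Since $a_1a_2$ has dp-rank $\leq 2$, the Claim argument from Theorem~\ref{th_invtype2} now runs with three sequences and produces formulas $\theta_l,\psi_l$ with parameters $\bar c_1,\bar c_2,e$. The point of this particular choice of sequences is that $\tp(\bar c_2/N\bar c_1)$ is finitely satisfiable in $M$, so $\bar c_2$ can be replaced by some $\bar c_2'\in M$, while $\tp(\bar c_1/N)$ is $M$-\emph{definable}, so the dependence on $\bar c_1$ can be eliminated by passing to a definition. After also pushing $e$ into $M$, one obtains $\theta\in\tp(d/M)$ directly, without ever proving that $p$ is definable.
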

\begin{proof}
Fix some model $M\prec N$, $N$ is very saturated and let $\phi(x_1,x_2;d)\in L(N)$ be non-forking over $M$. Let $a_1\hat{~}a_2\models \phi(x_1,x_2;d)$ such that $\tp(a_1,a_2/N)$ is non-forking over $M$. By Lemma \ref{lem_indfs}, we may assume that $p_1=\tp(a_1/N)$ is not finitely satisfiable in $M$. Therefore it is definable and since it is not generically stable, it does not commute with itself. Also we may assume that $p=\tp(a_1,a_2/N)$ is not definable, therefore there is some type $q\in S(N)$ finitely satisfiable in $M$ such that $p$ does not commute with $q$.

Now let $c_1,c_2\in \monster$ such that $(a_1\hat{~}a_2,c_2,c_1)\models p\otimes q\otimes p_1|_N$. Let $I$ be a Morley sequence of $p_1$ over everything and $J$ a Morley sequence of $q$ over everything. Then the sequences $\bar c_1=c_1+I$ and $\bar c_2=c_2+J$ are mutually indiscernible over $N$ (because the types $p_1$ and $q$ commute). But neither of them is indiscernible over $Na_1a_2$. Take some $M\pprec N_1\pprec N$ such that $\tp(N_1/Md)$ is finitely satisfiable in $M$. As the dp-rank of $a_1\hat{~}a_2$ over $N$ is 2 (by Proposition \ref{prop_addit}), we can apply Lemma \ref{lem_tech}. We conclude that $\tp(a_1\hat{~}a_2/\bar c_1\bar c_2N_1)\vdash \tp_\phi(a_1\hat{~}a_2/N)$ as witnessed by some $\psi(x;\bar c_1,\bar c_2, e)$, $\theta_l(y;\bar c_1,\bar c_2, e)$, $l=0,1$, with $e\in N_1$.

As $\tp(\bar c_2/N\bar c_1)$ is finitely satisfiable in $M$, there is $\bar c_2'\in M$ such that:
$$\models \theta_1(d;\bar c_1,\bar c'_2, e) \wedge (\exists x)(\forall y)(\theta_1(y;\bar c_1,\bar c'_2, e)\impl \phi(x;y)).$$

Let $\Theta(d;\bar c_1,\bar c'_2, e)$ be this conjunction. Since $\tp(\bar c_1/N)$ is definable over $M$, there is a formula $d\Theta(y;\bar z_2,\bar t)\in L(M)$ such that for all $y,\bar z_2,\bar t\in N$, we have $d\Theta(y;\bar z_2,\bar t)\leftrightarrow \Theta(y;\bar c_1,\bar z_2,\bar t)$. As $\tp(e/Md)$ is finitely satisfiable in $M$, we can find $e'\in M$ such that $d\Theta(d;\bar c'_2, e')$ holds. Then unwinding, we see that the type $\{\phi(x;d') : d' \models d\Theta(y;\bar c'_2, e')\}$ is consistent, as required.
\end{proof}


\subsection{More variables}

The proof of the two-variable case relied on the fact that non-forking formulas in one variable extend to definable types. However the conclusion we obtain is weaker and this prevents us from going on to higher arities. In this section, we do our best to pursue nonetheless. We manage to make an induction go through, but with an even weaker property.

In this section, we assume that $T$ is countable.

\smallskip
We start with a local version of Lemma \ref{lem_def}.

\begin{lemme}\label{lem_comloc}
Let $M\prec^{+} N$ and let $a\in \monster$ such that $p=\tp(a/N)$ is $M$-invariant. Let $q\in S_y(M)$ and $b\in q(N)$. The following are equivalent:

(i) $p\otimes \tilde q|_M = \tilde q\otimes p|_M$ for every global coheir $\tilde q$ of $q$.

(ii) for every formula $\phi(x;y)\in L(M)$ such that $a\models \phi(x;b)$, there is $\theta(y)\in q$ such that for any $b'\in \theta(M)$, $a\models \phi(x;b')$.
\end{lemme}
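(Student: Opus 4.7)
The statement is a local analogue of Lemma \ref{lem_def2}: instead of detecting definability of $p$ against all finitely satisfiable types, we test commutation only with coheirs of the single type $q$, and in place of full definability we extract the weaker ``$q$-uniformity'' of the $\phi$-type of $a$ at $b$. My plan is to follow the structure of the proof of Lemma \ref{lem_def2}, showing that for any $\phi(x;y) \in L(M)$ and any global coheir $\tilde q$ of $q$, both $p_x \otimes \tilde q_y|_M$ and $\tilde q_y \otimes p_x|_M$ contain $\phi(x;y)$ precisely when $\models \phi(a;b)$ holds; condition (ii) is exactly what is needed to align the two computations.

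For $(ii) \Rightarrow (i)$, fix a global coheir $\tilde q$ of $q$ and a formula $\phi(x;y) \in L(M)$. The first product is easy: $p_x \otimes \tilde q_y \vdash \phi(x;y)$ iff $\phi(x;c) \in p$ for $c \models \tilde q$, and since $c \equiv_M b$ while $p$ (extended to its unique global $M$-invariant extension) is $M$-invariant, this is equivalent to $\phi(x;b) \in p$, i.e.\ to $\models \phi(a;b)$. For the second product, $\tilde q_y \otimes p_x \vdash \phi(x;y)$ iff $\phi(a^*;y) \in \tilde q$ for $a^* \models p$; since $\tilde q$ is $M$-invariant (automatic for coheirs) and $a^* \equiv_M a$, this in turn is equivalent to $\phi(a;y) \in \tilde q$. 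Now I would apply (ii): to $\phi$ if $\models \phi(a;b)$, and otherwise to $\neg\phi$; one obtains $\theta(y) \in q = \tilde q|_M$ whose realizations in $M$ all satisfy $\phi(a;y)$ (resp.\ $\neg\phi(a;y)$). Since $\tilde q$ is finitely satisfiable in $M$ and $\theta \in \tilde q$, this forces $\phi(a;y) \in \tilde q$ (resp.\ $\neg\phi(a;y) \in \tilde q$) as desired. Hence both tensor products agree on $\phi(x;y)$.

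For the contrapositive of $(i) \Rightarrow (ii)$, suppose some $\phi(x;y) \in L(M)$ witnesses the failure of (ii): $\models \phi(a;b)$ holds, yet for every $\theta(y) \in q$ some $b' \in \theta(M)$ satisfies $\neg\phi(a;b')$. Then the family $\{\theta(M) : \theta(y) \in q\}$ together with $\{b' \in M^{|y|} : \models \neg\phi(a;b')\}$ has the finite intersection property as a collection of subsets of $M^{|y|}$, so it extends to an ultrafilter $U$. Let $\tilde q$ be the corresponding global coheir of $q$, determined by $\psi(y) \in \tilde q$ iff $\{b' \in M^{|y|} : \models \psi(b')\} \in U$ for each $\psi(y) \in L(\monster)$. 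By construction $\tilde q|_M = q$ and $\neg\phi(a;y) \in \tilde q$, so $\tilde q_y \otimes p_x \vdash \neg\phi(x;y)$, while the first-paragraph argument gives $p_x \otimes \tilde q_y \vdash \phi(x;y)$ from $\models \phi(a;b)$. Hence $p$ and $\tilde q$ fail to commute over $M$, contradicting (i). The only real subtlety is that the parametric formula $\neg\phi(a;y) \in L(\monster)$ needs to be seen by the ultrafilter on $M^{|y|}$; this is exactly what the coheir correspondence supplies, since a global type finitely satisfiable in $M$ is characterised by its trace on subsets of $M^{|y|}$ definable with arbitrary parameters.
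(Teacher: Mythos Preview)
Your proof is correct and follows essentially the same approach as the paper. The paper's argument is terser---it refers back to Lemma~\ref{lem_def2} for the ultrafilter step in $(i)\Rightarrow(ii)$ and only treats one truth value in $(ii)\Rightarrow(i)$---but the underlying mechanism (build a coheir from the failure of $(ii)$ via an ultrafilter; use finite satisfiability in $M$ to block $\neg\phi(a;y)\wedge\theta(y)$) is identical to yours.
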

\begin{proof}
(i) $\Rightarrow$ (ii): Assume that (i) holds and let $\phi(x;y)\in L(M)$ such that $a\models \phi(x;b)$. Then (i) implies that there is no coheir $\tilde q$ of $q$ such that $\tilde q\models \neg \phi(a;y)$. In other words, $q\cup \{\neg \phi(a;y)\}$ is not finitely satisfiable in $M$. This exactly means that for some $\theta(y)\in q$, $\theta(M)\cap \neg\phi(a;M)=\emptyset$, hence $\phi(a;b')$ holds for every $b'\in \theta(M)$.

(ii) $\Rightarrow$ (i): Assume (ii). Let $\phi(x;y)\in L(M)$ such that $p\otimes q\vdash \phi(x;y)$ and $\theta(y)\in L(M)$ given by (ii). Let $\tilde q$ a global coheir of $q$, and we have to show that $\tilde q \vdash \phi(a;y)$. Assume not, then $\tilde q\vdash \neg \phi(a;y)\wedge \theta(y)$. But that formula is not realised in $M$. Contradiction.
\end{proof}

Hence to solve Conjecture \ref{conj_main}, it is enough to prove that given $M\prec^{+} N$ and $\phi(x;d)\in L(N)$ non-forking over $M$, there is $a\in \phi(\monster;d)$, $\tp(a/N)$ does not fork over $M$ and commutes over $M$ with every coheir of $\tp(d/M)$. (Because then, taking $\theta(y)\in \tp(d/M)$ as in point (ii) of the lemma, we have that $\{\phi(x;b'):b'\in \theta(\monster)\}$ is consistent.)

We will not succeed in finding such a type, instead we will obtain a weaker property.
\begin{lemme}\label{lem_comloc2}
Let $M\prec M_1\prec^{+} N$ and let $a\in \monster$ such that $p=\tp(a/N)$ is $M$-invariant. Let $q\in S_y(M)$ and $b\in q(N)$. The following are equivalent:

(i) $p\otimes \tilde q|_{M_1} = \tilde q\otimes p|_{M_1}$ for every global coheir $\tilde q$ of $q$ extending $\tp(b/M_1)$;

(ii) for every formula $\phi(x;y)\in L(M_1)$ such that $a\models \phi(x;b)$, there is $\theta(y)\in \tp(b/M_1)$ such that for any $b'\in \theta(M)$, $a\models \phi(x;b')$.
\end{lemme}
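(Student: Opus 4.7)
The plan is to mirror the proof of Lemma \ref{lem_comloc}, with the $M$-invariance of $p$ compensating for the move from $L(M)$ to $L(M_1)$. The added flexibility in (ii)---that $\phi$ and $\theta$ live in $L(M_1)$ rather than $L(M)$---is absorbed on the (i) side by restricting attention to those coheirs $\tilde q$ of $q$ that extend $\tp(b/M_1)$.

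The crux, which I would establish first, is the identification: for every global coheir $\tilde q$ of $q$ extending $\tp(b/M_1)$, $p\otimes\tilde q|_{M_1}=\tp(a,b/M_1)$. Indeed, if $(a^*,b^*)\models p\otimes\tilde q$, then $\tp(b^*/M_1)=\tilde q|_{M_1}=\tp(b/M_1)$, and any automorphism fixing $M_1$ (hence $M$) that sends $b^*$ to $b$ must send $a^*$ to a realization of $p|_{M_1 b}$, because the global extension of $p$ is $M$-invariant. Thus $p\otimes\tilde q|_{M_1}$ is independent of the admissible $\tilde q$ and equals $\tp(a,b/M_1)$.

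Given this, for (i) $\Rightarrow$ (ii) I take $\phi(x;y)\in L(M_1)$ with $a\models\phi(x;b)$. By the identification, $\phi\in p\otimes\tilde q|_{M_1}$ for every admissible $\tilde q$, so by (i) also $\phi\in\tilde q\otimes p|_{M_1}$; specializing the $p$-draw to $a$ itself yields $\phi(a;y)\in\tilde q$. If (ii) failed, the partial type $\{\theta(y)\wedge\neg\phi(a;y):\theta\in\tp(b/M_1)\}$ would be finitely satisfiable in $M$ and extend to a global coheir $\tilde q'$ of $q$ with $\tilde q'|_{M_1}=\tp(b/M_1)$ and $\neg\phi(a;y)\in\tilde q'$, contradicting what was just shown applied to $\tilde q'$.

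For (ii) $\Rightarrow$ (i), fix an admissible $\tilde q$ and $\phi(x;y)\in L(M_1)$. Using $M$-invariance of $\tilde q$ and the fact that every realization of $p$ has the same $M$-type as $a$, one checks $\phi\in\tilde q\otimes p|_{M_1}$ iff $\phi(a;y)\in\tilde q$, while the identification gives $\phi\in p\otimes\tilde q|_{M_1}$ iff $\phi(a;b)$ holds. Commutation thus reduces to the equivalence ``$\phi(a;b)\Leftrightarrow\phi(a;y)\in\tilde q$''. Both directions come from applying (ii) to $\phi$ (respectively $\neg\phi$) to produce the appropriate $\theta$, and then exploiting finite satisfiability of $\tilde q$ in $M$ to derive a contradiction exactly as in Lemma \ref{lem_comloc}. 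The only nontrivial point in the whole argument is the opening identification; once it is in hand everything downstream is bookkeeping.
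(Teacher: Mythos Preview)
Your proof is correct and follows essentially the same route as the paper's: both directions reduce, via the identifications $p\otimes\tilde q|_{M_1}=\tp(a,b/M_1)$ and $\phi\in\tilde q\otimes p|_{M_1}\Leftrightarrow\tilde q\vdash\phi(a;y)$, to a finite-satisfiability/ultrafilter argument on $M$. The only omission is the paper's opening remark that one may assume $\tp(b/M_1)$ is finitely satisfiable in $M$ (otherwise there are no admissible coheirs and some $\theta\in\tp(b/M_1)$ has $\theta(M)=\emptyset$, making both (i) and (ii) vacuous); you should add this, since your construction of $\tilde q'$ in (i)$\Rightarrow$(ii) tacitly uses it.
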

\begin{proof}
We assume that $\tp(b/M_1)$ is finitely satisfiable in $M$, otherwise everything is trivial.

(i) $\Rightarrow$ (ii): Assume that (i) holds and let $\phi(x;y)\in L(M_1)$ such that $a\models \phi(x;b)$. Then (i) implies that there is no global coheir $\tilde q$ of $q$ exten\-ding $\tp(b/M_1)$ such that $\tilde q\models \neg \phi(a;y)$. Therefore the partial type $\tp_y(b/M_1) \cup \{\neg \phi(a;y)\}$ is not finitely satisfiable in $M$. Hence there is $\theta(y)\in \tp(b/M_1)$ such that $\theta(M)\subseteq \phi(a;M)$.

(ii) $\Rightarrow$ (i): Assume (ii). Let $\phi(x;y)\in L(M_1)$ such that $p\otimes q\vdash \phi(x;y)$ and $\theta(y)\in L(M_1)$ given by (ii). Let $\tilde q$ a global coheir of $q$ extending $\tp(b/M_1)$, and we have to show that $\tilde q \vdash \phi(a;y)$. Assume not, then $\tilde q\vdash \neg \phi(a;y)\wedge \theta(y)$. But that formula is not realised in $M$. Contradiction.
\end{proof}

We introduce the notion of ``$a_2$-forking" as defined in Cotter \& Starchenko's paper \cite{CotStar}. For this, we assume that $T$ is NIP.

Assume we have $M\prec^{+} N$ and $a_2\in \monster$ such that $\tp(a_2/N)$ is $M$-invariant. We say that a formula $\psi(x,a_2;d)\in L(Na_2)$ $a_2$-divides over $M$ if there is an $M$-indiscernible sequence $(d_i:i<\omega)$ inside $N$ with $d_0=d$ and $\{\psi(x,a_2;d_i):i<\omega\}$ is inconsistent. We define $a_2$-forking in the natural way: the formula $\psi(x,a_2;d)$ $a_2$-forks over $M$ if it implies a finite disjunction of formulas $\psi_i(x,a_2;d_i)\in L(Na_2)$ each of which $a_2$-divides over $M$.

\begin{fait}\label{fact_betafork}
Notations being as above, the following are equivalent:

(i) $\psi(x,a_2;d)$ does not $a_2$-divide over $M$;

(ii) $\psi(x,a_2;d)$ does not $a_2$-fork over $M$;

(iii) if $(d_i:i<\omega)$ is a strict Morley sequence of $\tp(d/M)$ inside $N$, then $\{\psi(x,a_2;d_i):i<\omega\}$ is consistent;

(iv) there is $a_1\models \psi(x,a_2;d)$ such that $\tp(a_1,a_2/N)$ is $M$-invariant.
\end{fait}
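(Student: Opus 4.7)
The plan is to treat $(iv)$ as a hub: derive $(i),(ii),(iii)$ from $(iv)$ by a common invariance argument, prove the main implication $(i)\Rightarrow(iv)$ via the NIP equivalence of forking and dividing, and close the cycle with $(ii)\Rightarrow(i)$ (trivial) and $(iii)\Rightarrow(i)$ (via the witnessing property of strict Morley sequences).

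\smallskip\noindent\emph{Implications from $(iv)$.} The unifying observation, used throughout, is that $M$-invariance of $\tp(a_2/N)$ makes every $M$-indiscernible sequence $(d_i)$ inside $N$ automatically $Ma_2$-indiscernible: two increasing index tuples from $(d_i)$ have the same $M$-type by indiscernibility, hence (by the invariance of $\tp(a_2/N)$) the same $Ma_2$-type. If moreover $\tp(a_1,a_2/N)$ is $M$-invariant, then $\psi(a_1,a_2;d_i)$ is constant in $i$, so $a_1$ witnesses consistency of $\{\psi(x,a_2;d_i)\}$. This immediately yields $(iv)\Rightarrow(i)$ (for arbitrary $M$-indiscernibles in $N$), $(iv)\Rightarrow(iii)$ (on specialization to strict Morley sequences), and $(iv)\Rightarrow(ii)$ (by applying the same reasoning to any $a_2$-dividing disjunct in a putative $a_2$-forking decomposition).

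\smallskip\noindent\emph{The key step $(i)\Rightarrow(iv)$.} I first show that $a_2$-dividing of $\psi(x,a_2;d)$ over $M$ coincides with ordinary dividing of that formula over $Ma_2$. One direction is the invariance upgrade above. For the reverse, given an $Ma_2$-indiscernible sequence $(d_i)$ in $\monster$ with $d_0=d$ witnessing division over $Ma_2$, use $|M|^+$-saturation of $N$ to realize in $N$, starting at $d$, an $M$-indiscernible sequence $(d'_i)$ with the same $M$-EM-type as $(d_i)$. The global $M$-invariant extension of $\tp(a_2/N)$ determines the $Ma_2$-type of any tuple from $(d'_i)$ from its $M$-type, so $(d'_i)$ and $(d_i)$ have the same $Ma_2$-EM-type, and $\{\psi(x,a_2;d'_i)\}$ is inconsistent too, witnessing $a_2$-dividing.

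Given this equivalence, $(i)$ amounts to \emph{$\psi(x,a_2;d)$ does not divide over $Ma_2$}, which by NIP and Chernikov--Kaplan is equivalent to not forking over $Ma_2$, equivalently to the existence of an $Ma_2$-invariant global extension $p_1\in S_x(\monster)$ of $\psi(x,a_2;d)$. Realize $a_1\models p_1|_{Na_2}$. Then $\tp(a_1/Na_2)$ is $Ma_2$-invariant and $a_1\models\psi(x,a_2;d)$. Finally, for $b,b'\in N$ with $b\equiv_M b'$, the $M$-invariance of $\tp(a_2/N)$ gives $b\equiv_{Ma_2}b'$, and $Ma_2$-invariance of $\tp(a_1/Na_2)$ then yields $\chi(a_1,a_2;b)\leftrightarrow\chi(a_1,a_2;b')$ for every formula $\chi$; hence $\tp(a_1,a_2/N)$ is $M$-invariant, giving $(iv)$.

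\smallskip\noindent\emph{Closing with $(iii)\Rightarrow(i)$.} Suppose for contradiction that $\psi(x,a_2;d)$ $a_2$-divides; by the equivalence established in the previous step, it divides (hence forks) over $Ma_2$. The witnessing property applied at the level of $Ma_2$ then yields inconsistency of $\{\psi(x,a_2;d_i)\}$ along every strict Morley sequence of $\tp(d/Ma_2)$ in $\monster$. To derive a contradiction with $(iii)$, one needs to produce a strict Morley sequence of $\tp(d/M)$ inside $N$ on which $\{\psi(x,a_2;d_i)\}$ is already inconsistent. Using that every $M$-invariant global type is automatically $Ma_2$-invariant, and that $M$-indiscernible sequences in $N$ are $Ma_2$-indiscernible, one matches such a strict Morley sequence over $Ma_2$ (witnessing inconsistency) with a strict Morley sequence of $\tp(d/M)$ realized in $N$ via the saturation argument of the key step.

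\smallskip\noindent\emph{Expected main obstacle.} The delicate step is the last one: reconciling the strict non-forking datum over $M$ (used in $(iii)$) with the strict non-forking datum over $Ma_2$ (needed to invoke the witnessing property at the correct level). One must check that an $M$-invariant extension of $\tp(d/Md_{<i})$ continues to behave as a strict non-forking extension over $Ma_2 d_{<i}$ when the sequence is realized in $N$, relying crucially on the $M$-invariance of $\tp(a_2/N)$ to control the interaction between the Morley extension and the parameter $a_2$.
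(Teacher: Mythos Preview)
Your route differs from the paper's. The paper observes that $\psi(x,a_2;d)$ $a_2$-divides (resp.\ $a_2$-forks) over $M$ if and only if the two-variable partial type $\psi(x,y;d)\cup\tp_y(a_2/N)$ divides (resp.\ forks) over the \emph{model} $M$; then all four equivalences follow directly from the standard NIP package (forking $=$ dividing over models, strict Morley sequences witness dividing, non-forking $=$ existence of an $M$-invariant extension). Your reduction is instead to ordinary dividing/forking of $\psi(x,a_2;d)$ over the base $Ma_2$.

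This creates two problems. First, the backward direction of your equivalence ``$a_2$-divides over $M$ $\Leftrightarrow$ divides over $Ma_2$'' is not justified. You take an $Ma_2$-indiscernible $(d_i)$ in $\monster$ witnessing division, realize a sequence $(d'_i)$ in $N$ with the same $M$-EM-type, and then assert that $(d'_i)$ and $(d_i)$ have the same $Ma_2$-EM-type because the global $M$-invariant extension of $\tp(a_2/N)$ ``determines the $Ma_2$-type from the $M$-type''. But this determination only applies to tuples \emph{in $N$}: for the original $(d_i)$ (with $d_i\notin N$ for $i\geq 1$) there is no reason $\tp(a_2/Md_0\cdots d_{k-1})$ should agree with the restriction of the global invariant type, so the $k$-inconsistency need not transfer to $(d'_i)$. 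Second, even granting your equivalence, you invoke Chernikov--Kaplan (forking $=$ dividing) and ``non-forking $\Leftrightarrow$ invariant extension'' over $Ma_2$, which is not a model; both facts require the base to be an extension base, and you have not argued this for $Ma_2$. Your own discussion of $(iii)\Rightarrow(i)$ runs into the same base-change difficulty and remains a sketch.

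All of this is bypassed by the paper's device of treating $a_2$ as a variable and working with $\psi(x,y;d)\cup\tp_y(a_2/N)$ over $M$: the base stays a model, and condition $(iv)$ is exactly the existence of an $M$-invariant extension of that partial type.
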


The proof of the equivalences of (i)-(iii) can be found in the Appendix of \cite{CotStar}. The proof is an easy adaptation of the corresponding facts for usual dividing and forking proved in \cite{CherKapl}. It is assumed in \cite{CotStar} that $\tp(a_2/N)$ is $M$-definable, but this is only used through Remark 5.11 there which only needs $M$-invariance.

It remains to show the equivalence to (iv). It is clear that (iv) implies (iii). Conversely assume that (iv) does not hold. Then $\psi(x,a_2;d)$ implies a finite disjunction of formulas of the form $\theta(x,a_2;e,e')=\neg (\zeta(x,a_2;e)\leftrightarrow \zeta(x,a_2;e'))$, with $e,e'\in N$, $\tp(e/M)=\tp(e'/M)$. By NIP, the formula $\theta(x,y;e,e')$ divides over $M$ (since it does not extend to an invariant type) hence $\theta(x,a_2;e,e')$ $a_2$-divides over $M$ which implies that $\psi(x,a_2;d)$ $a_2$-forks over $M$.

\medskip
Let $M\prec^{+} N$, $M$ is countable and $d\in N$. We can find $M\prec M_1 \prec N$ such that:

$\bullet_0$ $M_1$ is countable;

$\bullet_1$ $\tp(d/M_1)$ is finitely satisfiable in $M$;

$\bullet_2$ for every finite $m\in M_1$, there is $d'\in M_1$ such that $(m,d')\equiv_{M} (m,d)$;

$\bullet_3$ for every finite $m\in M_1$, there is a strict Morley sequence $(m_i:i<\omega)$ of $\tp(m/M)$ in $M_1$, with $m_0=m$;

\smallskip
It easy to build such a model in $\omega$ steps: first fix a global coheir $\tilde q$ of $\tp(d/M)$. Let $M^0=M$, and having built $M^k$ take $M^{k+1}\supseteq M^k$ to satisfy $\bullet_2$, $\bullet_3$ where $m$ is taken in $M^k$.
Then move $M^{k+1}$ so that $\tp(d/M^{k+1})=\tilde q|M^{k+1}$. Having done this inductively for all $k$, let $M_1$ be the union of the $M^k$'s.

\begin{prop}\label{prop_mainprop}
Assume $T$ is countable and dp-minimal. Let $M\prec M_1\prec^{+} N$, and $d\in N$ as above. Let $a_1,a_2\in \monster$, $|a_1|=1$, such that $p=\tp(a_1,a_2/N)$ is $M$-invariant, $\phi_0(a_1,a_2;d)$ holds for some $d\in N$ and $\tp(a_2/N)$ commutes over $M_1$ with every coheir of $\tp(d/M)$ extending $\tp(d/M_1)$.

Then there is $a'_1\in \monster$ such that $\phi_0(a'_1,a_2;d)$ holds, $\tp(a'_1,a_2/N)$ is $M$-invariant  and commutes over $M_1$ with every coheir of $\tp(d/M)$ extending $\tp(d/M_1)$.
\end{prop}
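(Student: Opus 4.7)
By Lemma~\ref{lem_comloc2} applied with the global type $\tp(a_1', a_2/\monster)$ and the coheirs of $\tp(d/M)$ extending $\tp(d/M_1)$, the commutativity we want is equivalent to: for every $\psi(x_1,x_2;y)\in L(M_1)$ with $\psi(a_1',a_2;d)$, there is $\theta(y)\in\tp(d/M_1)$ with $\theta(M)\subseteq \psi(a_1',a_2;M)$. So the task is to produce $a_1' \in \phi_0(\monster; a_2, d)$ with $\tp(a_1', a_2/N)$ $M$-invariant satisfying this local condition.

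The strategy mirrors the Claim-argument in the proof of Theorem~\ref{th_invtype2}, now carried out over $Na_2$ and leveraging dp-rank $1$ of $\tp(a_1/Na_2)$ (available since $|a_1|=1$ and $T$ is dp-minimal). Fix $\psi(x_1,x_2;y)\in L(M_1)$. For a coheir $\tilde q$ of $\tp(d/M)$ extending $\tp(d/M_1)$ witnessing a potential failure, let $\bar d$ be a Morley sequence of $\tilde q$ over $Na_2$: it is $Na_2$-indiscernible, but not $Na_1a_2$-indiscernible whenever $\tilde q$ is chosen to break commuting with $\tp(a_1,a_2/N)$. For each global $y$-type $r$ finitely satisfiable in $N$ (playing the role of $r$ in Theorem~\ref{th_invtype2}'s Claim), I would show there are $\theta_r(y)\in r$ and $\psi_r(x_1)\in\tp(a_1/N\bar d a_2)$ with $\psi_r\wedge\theta_r\vdash\psi(x_1,a_2;y)^{l(r)}$: otherwise extending a counterexample $c\models r$ via a Morley sequence of a global coheir extension of $r$ gives $\bar c,\bar d$ mutually $Na_2$-indiscernible but neither $Na_1a_2$-indiscernible, contradicting dp-rank $1$. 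The hypothesis that $\tp(a_2/N)$ commutes with $\tilde q$ over $M_1$ is precisely what enables the mutual $Na_2$-indiscernibility. Compactness over $S_y(N\bar d a_2)$, combined with moving parameters into $M_1$ via the $M$-invariance of $\tp(a_1,a_2,\bar d/N)$, produces formulas $\psi_l(x_1;a_2,\bar d,e)$ and $\theta_l(y;a_2,\bar d,e)$ ($l\in\{0,1\}$, $e\in M_1$) with $\theta_0,\theta_1$ covering $S_y(N)$ and $\psi_l\wedge\theta_l\vdash\psi(x_1,a_2;y)^l$.

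To extract $a_1'$: since $\tp(\bar d/M_1)$ is finitely satisfiable in $M$ (from $\bullet_1$ plus an appropriate choice of $\tilde q$), find $\bar d_0\in M$ satisfying $\theta_1(d;a_2,\bar d_0,e)\wedge (\exists x_1)(\forall y)(\theta_1(y;a_2,\bar d_0,e)\impl\psi(x_1,a_2;y))$. An $x_1$-witness $a_1'$ then satisfies $\psi(a_1',a_2;d)$ with $\theta_1(\cdot;a_2,\bar d_0,e)\in\tp(d/M_1)$ playing the role of $\theta$ in Lemma~\ref{lem_comloc2}(ii); the properties $\bullet_2,\bullet_3$ of $M_1$ are used to ensure the relevant parameters lie in $M_1$. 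Running this construction for all $\psi$ simultaneously via a compactness argument on the partial type defining $a_1'$, with the Fact on $a_2$-non-forking ensuring that $\tp(a_1',a_2/N)$ remains $M$-invariant, yields the desired $a_1'$.

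The main obstacle is producing $\bar c$ mutually $Na_2$-indiscernible with $\bar d$: the commuting of $\tp(a_2/N)$ is only assumed over $M_1$, so reaching $N$-level indiscernibility requires careful use of the structural properties $\bullet_0$--$\bullet_3$ of $M_1$ (in particular the strict Morley sequences living in $M_1$ from $\bullet_3$, and an alternation-number argument to pass from $M_1$-indiscernibility up to $N$-indiscernibility). Once this is set up, the rest of the argument transcribes from Theorem~\ref{th_invtype2} and the extraction step essentially follows the end of Proposition~\ref{prop_dim2}.
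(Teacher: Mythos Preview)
Your outline has the right core idea (run the dp-rank~1 argument on $a_1$ over the base $a_2$), but there are two genuine gaps that the paper's proof handles differently and that your plan does not resolve.

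\textbf{Working over $N$ versus $M_1$.} You set up $\bar d$ as a Morley sequence of a coheir $\tilde q$ and want it to be $Na_2$-indiscernible, then quantify over types $r$ finitely satisfiable in $N$. But the hypothesis only says $\tp(a_2/N)$ commutes with $\tilde q$ \emph{over $M_1$}; this yields $M_1a_2$-indiscernibility of $\bar d$, not $Na_2$-indiscernibility. You acknowledge this as ``the main obstacle'' and propose to lift to $N$-level via $\bullet_0$--$\bullet_3$ and an alternation argument, but no such lift is available (and none is attempted in the paper). The paper instead runs the entire argument over the countable base $M_1$: the sequence $\bar b$ is only $M_1a_2$-indiscernible, the types $r$ range over $S_y(M_1a_2\bar b)$ finitely satisfiable in $M_1$, and the formulas $\theta_l,\psi$ have parameters in $M_1a_2\bar b$. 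Property $\bullet_2$ is then used not to move parameters but to show that $\tp(d/M_1a_2\bar b)$ is finitely satisfiable in $M_1$, so that $\theta_1(d)$ holds.

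\textbf{The ``simultaneous compactness'' step.} You propose to handle all $\psi$ at once by compactness on the partial type defining $a_1'$. This does not work as stated: the formulas $\psi_l,\theta_l$ produced for a given $\phi$ depend on the current $a_1$, so when you replace $a_1$ by a new $a_1'$ to handle the next formula, there is no reason the old witnesses survive. The paper resolves this with an explicit inductive bookkeeping device (``$\Gamma$-goodness''): at each stage one carries a finite conjunction $\psi_*(x_1,a_2;g)$ recording the previously found witnesses, and when passing to the new $a_1'$ one uses $\bullet_3$ to pick a strict Morley sequence $(e_i,g_i)$ inside $M_1$ so that $\psi(x,a_2;\bar b',e)\wedge\psi_*(x,a_2;g)$ is seen not to $a_2$-fork over $M$; this is what guarantees the new $a_1'$ still satisfies all earlier $\psi_\phi$. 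The construction runs through an $\omega$-enumeration of $L(M_1)$ (this is exactly why $M_1$ is countable), and at the limit one takes an accumulation point of the $\tp(a_1^k,a_2/N)$. Your plan does not supply this mechanism, and the $a_2$-non-forking Fact alone does not deliver it.
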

\begin{proof}
We say that a tuple $a\hat{~}a_2$, $|a|=|a_1|$, is $\Gamma$-good, when:

-- $\Gamma = \{ (\psi_\phi (x_1,x_2),\theta_\phi(x_2;d) ) : \phi\in \Gamma_L\}$; where $\psi_\phi, \theta_\phi \in L(M_1)$;

-- $\Gamma_L \subseteq L(M_1)$ and each $\phi\in \Gamma_L$ has the form $\phi(x_1,x_2;y)$, $|x_i|=|a_i|$, $|y|=|d|$;

-- for each $\phi(x_1,x_2;y)\in \Gamma_L$, we have

 $\models \psi_\phi(a,a_2) \wedge \theta_\phi(a_2;d)$ and
 
  $\models (\forall x,y)( \psi_\phi(x,a_2)\wedge \theta_\phi(a_2;y) \impl \phi(x,a_2;y))$.

\smallskip \noindent
\underline{Claim 1}:
If $a\hat{~}a_2$ is $\Gamma$-good, where

 $\Gamma_L=\{\phi\in L(M_1): \phi=\phi(x_1,x_2;y)\in\tp(a,a_2,d/M_1) \}$,
 
  then $\tp(a,a_2/N)$ commutes over $M_1$ with every coheir of $\tp(d/M)$ extending $\tp(d/M_1)$.

Proof: We have to check condition (ii) of the Lemma \ref{lem_comloc2}. Let $\phi(x_1,x_2;y)\in L(M_1)$ such that $a\hat{~}a_2\models \phi(x_1,x_2;d)$. By hypothesis on $\tp(a_2/N)$, and Lemma \ref{lem_comloc2}, there is $d\theta(y)\in \tp(d/M_1)$ such that for every $b\in M$, we have $d\theta(b)\impl \theta_\phi(a_2;b)$. Then for each $b\in d\theta(M)$, we have $\models \phi(a,a_2;b)$. Hence the claim is proved.

\medskip
Fix an enumeration of formulas $\phi(x_1,x_2;y)$ in $L(M_1)$, of order type $\omega$ for which $\phi_0$ is the first formula. Assume that we are given $a_1,a_2$ as in the statement and some $\Gamma$, $\Gamma_L$ finite, such that $a_1\hat{~}a_2$ is $\Gamma$-good.  Let $\psi_*(x_1,x_2;g)\in L(M_1)$ be the conjunction of $\psi_\phi(x_1,x_2)$ for $\phi\in \Gamma_L$. If $a_1\hat{~}a_2$ commutes over $M_1$ with every coheir of $\tp(d/M)$ extending $\tp(d/M_1)$, we are done. Otherwise, there is such a coheir $\tilde q$ such that $p=\tp(a_1,a_2/N)$ does not commute with $\tilde q$ over $M_1$. Let $I\hat{~}a_1a_2 \hat{~}b$ realise $\tilde q^{\omega} \otimes p\otimes \tilde q$ over $N$ and set $\bar b=b+I$. Then $\bar b$ is indiscernible over $M_1 a_2$ (because $\tp(a_2/N)$ commutes with $\tilde q$ over $M_1$), but it is not indiscernible over $M_1 a_1a_2$ by assumption.

Let $\phi(x_1,x_2;y)$ be the least formula in $L(M_1)$ which is not in $\Gamma_L$ and such that $\phi(a_1,a_2;d)$ holds. We can apply Lemma \ref{lem_techprem} with $(a,B,n,\bar b_1..\bar b_n)$ there being $(a_1,M_1a_2,1,\bar b)$ here. It gives us formulas $\psi(x_1,a_2;\bar b,e)\in \tp(a_1/M_1a_2\bar b)$, $\theta_l(y)=\theta_l(a_2,\bar b,e;y)$, $l=0,1$, $e\in M_1$ such that:

$(*)$\quad $\psi(x_1,a_2;\bar b,e)\wedge \theta_l(a_2,\bar b,e;y)\rightarrow \phi^l(x_1,a_2;y)$.
%

\smallskip \noindent
\underline{Claim 2}: We have $\models \theta_1(d)$.

Proof: We first show that $\tp(d/M_1 a_2 \bar b)$ is finitely satisfiable in $M_1$. So let $\zeta(y;m_1,a_2,\bar b)\in \tp(d/M_1 a_2 \bar b)$. By $\bullet_2$, there is $d'\in M_1$ such that $(m_1,d')\equiv_{M} (m_1,d)$. As $\tp(a_2,\bar b/N)$ is $M$-invariant, we have $d'\models \zeta(y;m_1,a_2,\bar b)$ as required. It follows that one of $\theta_0(d)$ or $\theta_1(d)$ must hold. But since $\phi(a_1,a_2;d)$ holds, $\theta_0(d)$ cannot hold by $(*)$. So the claim is proved.

\smallskip \noindent
\underline{Claim 3}: $\tp(\bar b/M_1 a_2 d)$ is finitely satisfiable in $M$.

Proof: Let $\phi(\bar x;m_1,a_2,d)\in \tp(\bar b/M_1a_2d)$, $m_1\in M_1$. By $\bullet_2$, there is $d'\in M_1$ such that $m_1\hat{~}d'\equiv_M m_1\hat{~}d$. As $\tp(a_1a_2\bar b/N)$ is $M$-invariant, we also have $\phi(\bar x;m_1,a_2,d')\in \tp(\bar b/M_1a_2)$. By construction, and using that $\tp(a_2/N)$ commutes with $\tilde q$ over $M_1$, $\bar b$ realizes a Morley sequence of $\tilde q$ over $M_1a_2$. Hence $\tp(\bar b/M_1a_2)$ is finitely satisfiable in $M$. We thus get some $\bar b'\in M$ such that $\phi(\bar b';m_1,a_2,d')$ holds, but then so does $\phi(\bar b';m_1,a_2,d)$ since $m_1\hat{~}d$ and $m_1\hat{~}d'$ have the same type over $Ma_2$. This proves the claim.
\medskip

Let $(e_i,g_i:i<n)\in M_1$ be a sufficiently long strict Morley sequence over $M$ with $(e_0,g_0)=(e,g)$. By $M$-invariance of $\tp(a_1a_2\bar b/N)$, $\psi(a_1,a_2;\bar b,e_i)\wedge \psi_*(a_1,a_2;g_i)$ holds for all $i<n$.

By Claim 3, there is $\bar b'\in M$ satisfying:

-- $(\exists x)\bigwedge_{i<n} \psi(x,a_2;\bar b',e_i) \wedge \psi_*(x,a_2;g_i)$;

-- $(\forall x,y) \psi(x,a_2;\bar b',e)\wedge \theta_1(a_2,\bar b',e;y)\impl \phi(x,a_2;y)$;

-- $\theta_1(a_2,\bar b',e;d)$.

By Fact \ref{fact_betafork} and having taken $n$ large enough, the first point implies that $\psi(x,a_2;\bar b',e)\wedge \psi_*(x,a_2;g)$ does not $a_2$-fork over $M$. So we can find $a'_1$ realising that formula such that $\tp(a'_1,a_2/N)$ is $M$-invariant.

Set $\Gamma'_{L}=\Gamma_{L} \cup \{\phi(x_1,x_2;y)\}$ and $\Gamma'=\Gamma\cup \{(\psi(x_1,x_2),\theta_1(x_2;d))\}$, then the pair $a'_1\hat{~}a_2$ is $\Gamma'$-good.

%
%
%
%
%
%

\medskip
Now to prove the proposition, we iterate the above procedure using $a'_1\hat{~}a_2$ instead of $a_1,a_2$ and $\Gamma'$ instead of $\Gamma$. If we stop at some finite stage, we have what we want. If not, then we have defined a sequence $a^{k}_1$, $k<\omega$ of tuples and increasing sets $\Gamma_k$. Let $a'_1\in \monster$ be such that $\tp(a'_1,a_2/N)$ is an accumulation point of $\tp(a^{k}_1,a_2/N)$. Then $a'_1\hat{~}a_2$ is $\Gamma$-good, where $\Gamma=\bigcup_{k<\omega} \Gamma_k$ and its type over $N$ is $M$-invariant. Hence by the first claim, we are done.
\end{proof}

\begin{rem}
Why do we bother with $M_1$? The problem is that to make sure that $\Gamma$ increases throughout the construction, we need it to remain finite. So we can only deal with a countable set of parameters. This is the role of $M_1$: it controls a priori the parameters from $\psi$ and $\theta_1$.
\end{rem}

\subsection{Directionality}

Recall that an NIP theory $T$ is of \emph{small directionality}, if given a model $M$ and $p\in S(M)$, then for any finite set $\Delta$ of formulas, the global coheirs of $p$ determine only finitely many $\Delta$-types (and thus $p$ has at most $2^{|T|}$ coheirs). It is of \emph{medium directionality} if it is not of small directionality and if the global coheirs of every such $p$ determine at most $|M|$ $\Delta$-types (and thus $p$ has at most $|M|^{|T|}$ coheirs).

Those notions are defined and investigated by Kaplan and Shelah in \cite{Sh946}.
%
%
%
%
%
%
%
%
%
\begin{thm}\label{th_medium}
If $T$ is countable, dp-minimal and of small or medium directionality, then Conjecture \ref{conj_main} holds.
\end{thm}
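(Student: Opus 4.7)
The plan is to combine Theorem \ref{th_morevar} with the directionality assumption, which constrains the variety of coheirs so that a countable representative family controls them all.

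First, reduce to countable $M$ (as observed at the end of Section 2) and set $q = \tp(d/M)$. Small or medium directionality gives, for each finite set of formulas $\Delta$, at most $|M| = \aleph_0$ many $\Delta$-restrictions among coheirs of $q$. Enumerating the countably many finite $\Delta \subseteq L(M)$ and taking a representative coheir for each $\Delta$-restriction yields a countable family $Q \subseteq S_y(\monster)$ of coheirs of $q$ such that every coheir of $q$ agrees, on each finite $\Delta$, with some $s \in Q$. Arrange that $d$ realises one of the $s \in Q$, and let $\bar d = (d_s : s \in Q)$ realise the product $\bigotimes_{s \in Q} s$ over $M$. Then construct $M_1$ satisfying $\bullet_0$--$\bullet_3$ with $\bar d$ in place of $d$, with $\bar d$ still realising the product over $M_1$; the $\omega$-step construction given before Theorem \ref{th_morevar} adapts without change.

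Next, apply Theorem \ref{th_morevar} using $\bar d$ as the parameter tuple. The tuple is countably infinite, but since every formula appearing in the arguments of Section 5 uses only finitely many coordinates, the proof goes through unchanged. This produces $a \in \phi(\monster;d)$ with $p := \tp(a/N)$ that is $M$-invariant and commutes over $M_1$ with every coheir of $\tp(\bar d/M)$ extending $\tp(\bar d/M_1)$. In particular, for each $s \in Q$, the type $s$ itself (placed into the appropriate coordinate of the product) is such a coheir, so $p$ commutes over $M_1$, and hence over $M$, with every $s \in Q$.

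Finally, promote this to commutation of $p$ with \emph{every} coheir of $q$ by a formula-by-formula matching argument: for each $\phi(x;y) \in L(M)$, any coheir $\tilde q$ of $q$ agrees with some $s \in Q$ on $\{\phi\}$, and then $p \otimes \tilde q \vdash \phi(x;y) \iff p \otimes s \vdash \phi(x;y) \iff s \otimes p \vdash \phi(x;y) \iff \tilde q \otimes p \vdash \phi(x;y)$, using commutation of $p$ with $s$. Lemma \ref{lem_comloc} then supplies $\theta(y) \in q$ with $\phi(a;b')$ holding for all $b' \in \theta(M)$, which is the form of Conjecture \ref{conj_main} singled out in the discussion following that lemma. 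The main obstacle is the selection of $Q$: one must verify that the directionality assumption really delivers countably many $\Delta$-restrictions when $M$ is countable, and that matching coheirs at the level of single formulas is the correct notion for transferring commutation. Both are manageable once one unpacks the definitions, and once they are in place everything else is a routine assembly of the machinery of Section 5 with Lemma \ref{lem_comloc}.
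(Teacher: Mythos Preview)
Your proposal is correct and follows essentially the same route as the paper: reduce to countable $M$, use the directionality hypothesis to select a countable family $Q$ of coheirs dense for finite $\Delta$-restrictions, form $\bar d$ realising $\bigotimes_{s\in Q} s$, build $M_1$ for $\bar d$, apply Theorem~\ref{th_morevar}, and then propagate commutation from the $s\in Q$ to all coheirs by matching on a single formula at a time before invoking Lemma~\ref{lem_comloc}. The one point where your phrasing is slightly loose is the passage from commutation with coheirs of $\tp(\bar d/M)$ extending $\tp(\bar d/M_1)$ to commutation with each individual $s$: rather than ``placing $s$ into a coordinate'', the clean way is to note that the product $\bigotimes_{s'\in Q} s'$ is itself such a coheir, and that commutation of $p$ with it over $M_1$ restricts coordinate-wise to give commutation of $p$ with each $s$ over $M_1$ (and hence over $M$).
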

\begin{proof}
Let $M\models T$ be countable, $M\prec^+ N$ and $\phi(x_1,\ldots,x_n;d)\in L(N)$ non-forking over $M$. By the remark after Lemma \ref{lem_comloc}, we need to prove that there is $\bar a=(a_1,\ldots,a_n)\in \phi(\monster;d)$, $\tp(\bar a/N)$ does not fork over $M$ and commutes over $M$ with every coheir of $\tp(d/M)$.

Let $Q\subset S_y(\monster)$ be a countable set of coheirs of $q=\tp(d/M)$ such that for every finite $\Delta$, and any coheir $\tilde q$ of $q$, there is $s\in Q$ such that $\tilde q$ and $s$ have the same restriction to instances of formulas in $\Delta$. Let $\tilde s=\bigotimes_{s\in Q} s$ (the product being taken in any order). Let also $\tilde q$ be a strictly non-forking coheir of $q$ (which exists by \cite[Proposition 3.7 (1)]{CherKapl}). Finally, let $\bar e$ in $N$ realize $\tilde s \otimes \tilde q^{(\omega)}$.

We can find a countable model $M_1$ such that $M\prec M_1 \prec N$, $\tp(\bar e/M_1)=\tilde s\otimes \tilde q^{(\omega)}|_{M_1}$ and $\bullet_2$, $\bullet_3$ are satisfied with $\bar e$ instead of $d$. Without loss, assume that $d\in \bar e$ and call $\bar d=(d_i:i<\omega)$ the realisation of $\tilde q^{(\omega)}$ in $\bar e$.

We now build by induction on $k\leq n$ tuples $(a_1^k,\ldots,a_n^k)\models \phi(x_1,\ldots,x_n;d)$ such that $\tp(a_1^k,\ldots,a_n^k/N)$ is $M$-invariant and $\tp(a_1^k,\ldots,a_k^k/N)$ commutes with every coheir of $\tp(\bar e/M)$ extending $\tp(\bar e/M_1)$.

As $\phi(x_1,\ldots,x_n;d)$ does not fork over $M$, there is $(a_1^0,\ldots,a_n^0)\models \phi(x_1,\ldots,x_n;d)$ such that $\tp(a_1^0,\ldots,a_n^0/N)$ is $M$-invariant.

Assume that for some $k<n$, we have found $(a_1^k,\ldots,a_n^k)$. For $l\leq k$, set $a_{l}^{k+1}=a_l^k$. Fix some $m>\alt(\phi)$ and define 
$$\phi_k(x_1,\ldots,x_{k+1};\bar d) = (\exists x_{k+2},\ldots,x_n) \bigwedge_{i<m} \phi(x_1,x_2,\ldots,x_n;d_i).$$

Note that by $M$-invariance, $(a^k_1,\ldots,a^k_{k+1})\models \phi_k$. By Proposition \ref{prop_mainprop}, we may find $a^{k+1}_{k+1}\in \monster$ such that $(a^k_1,\ldots,a^k_{k},a^{k+1}_{k+1})\models \phi_k$ and $\tp(a^k_1,\ldots,a^k_{k},a^{k+1}_{k+1}/N)$ is $M$-invariant and commutes with every coheir of $\tp(\bar e/M)$ extending $\tp(\bar e/M_1)$. Next, by the properties of strict non-forking sequences, we know that the formula $\phi(a^k_1,\ldots,a^k_{k},a^{k+1}_{k+1},x_{k+2},\ldots,x_n;d)$ does not $(a^k_1,\ldots,a^k_{k},a^{k+1}_{k+1})$-fork over $M$. Hence we may find $a^{k+1}_{k+2},\ldots,a^{k+1}_n \in \monster$ such that $\phi(a^{k+1}_1,\ldots,a^{k+1}_n;\bar d)$ holds and $\tp(a^{k+1}_1,\ldots,a^{k+1}_n/N)$ is $M$-invariant. This finishes the induction.

Let $\bar a=(a^n_1,\ldots,a^n_n)$.

\smallskip \noindent
\underline{Claim}: $p=\tp(\bar a/N)$ commutes over $M$ with every coheir of $\tp(d/M)$.

Proof: By construction, $p$ commutes over $M_1$ with $\tilde s\otimes \tilde q^{(\omega)}$. In particular, $p$ commutes over $M$ with any $s\in Q$. Let $\tilde q$ be any coheir of $\tp(d/M)$ and $\psi(x;y)\in L(M)$ a formula. Assume that $p\vdash \psi(x;d)$. Let $s\in Q$ be such that $s$ and $\tilde q$ have the same restriction to instances of $\psi$. Then $p$ commutes with $s$, hence $s\vdash \psi(a;y)$, so $\tilde q\vdash \psi(a;y)$. As this is true for all $\psi$, $p$ commutes with $q$ over $M$.

This finishes the proof.
%
%
\end{proof}

At this point one would hope that every dp-minimal theory is of small or medium directionality, but unfortunately this is not true. In fact RCF has large directionality (see \cite{Sh946}).

\section{Dp-rank}\label{sec_dprank}

In this section, we will always distinguish points $a,b,\ldots$ from tuples $\bar a,\bar b,\ldots$. We \emph{do not} work in $T^{eq}$ and in fact most of our results do not carry through to imaginaries.

\smallskip
The reader should have in mind the definition and basic properties of dp-rank as recalled in Section \ref{sec_dprankintro}.

\smallskip\noindent
\textbf{Assumption:} From now on, $T$ is a dp-minimal (one-sorted) theory.

\subsection{Additivity and $\acl$-dimension}

To begin with, we characterise when dp-rank is additive, {\it i.e.}, when $\dpr(\bar a,\bar b/A)=\dpr(\bar a/A\bar b )+\dpr(\bar b/A)$. By an immediate induction, this is equivalent to $\dpr(a,\bar b/A)=\dpr(a/A\bar b)+\dpr(\bar b/A)$ for all $a,\bar b,A$.

We say that $\acl$ satisfies exchange if for any base $A$ and any two points $a,b$, we have $b\in \acl(Aa)\setminus \acl(A) \Longrightarrow a\in \acl(Ab)$. In this case, $\acl$ defines a pregeometry and gives rise to a dimension in the usual way.

\begin{obs}[$T$ is dp-minimal]
Assume that dp-rank is additive, then $\acl$ satisfies exchange.
\end{obs}
\begin{proof}
Note that for a point $a$, we have $\dpr(a/A)=1\iff a\notin \acl(A)$.

Let $A,a,b$ such that $b\in \acl(Aa)\setminus \acl(A)$. Necessarily, $a\notin \acl(A)$. Then $\dpr(ab/A)=\dpr(a/A)+\dpr(b/aA)=1$. We also have $\dpr(ab/A)=\dpr(b/A)+\dpr(a/bA)=1+\dpr(a/bA)$. Hence $\dpr(a/bA)=0$ and $a\in \acl(bA)$.
\end{proof}

We now show the converse.

\begin{prop}[$T$ is dp-minimal]
Assume that $\acl$ satisfies exchange, then, over any base $A$:

$\bullet_0$ dp-rank is additive;

$\bullet_1$ dp-rank coincides with $\acl$-dimension;

$\bullet_2$ if $a_1,\ldots,a_n$ are $\acl$-independent, then we can find mutually indiscernible non-constant sequences $I_1,\ldots,I_n$ such that $I_k$ begins with $a_k$.
\end{prop}
\begin{proof}
It is enough to prove $\bullet_2$ (because it shows that dp-rank is bounded below by $\acl$-dimension, and the opposite inequality follows from sub-additivity). We show it by induction on $n$. For $n=1$, it is clear.

Assume it for $n$ and let $a_1,\ldots, a_{n+1}$ be $\acl$-independent over some base $A$. Then $a_1,\ldots,a_n$ are $\acl$-independent over $Aa_{n+1}$, hence by induction hypothesis, we can find non-constant sequences $I_1,\ldots,I_n$ mutually indiscernible over $Aa_{n+1}$ and $I_k$ starts with $a_k$.

\smallskip
\underline{Claim}: $a_{n+1}\notin \acl(A,I_1,\ldots,I_n)$.

If this is not true, then by exchange, there are some finite set $B\subset A\cup I_1\cup \cdots \cup I_n$, $k\leq n$ and $b\in I_k$ such that $b\notin \acl(B)$, but $b\in \acl(Ba_{n+1})$. Now increase all the sequences to be of order type $\mathbb Q$. We see that there are infinitely many points having the same type as $b$ over $Ba_{n+1}$. A contradiction.

\smallskip
We can therefore find a sequence $I_{n+1}$ which is indiscernible over $AI_1\ldots I_n$ and begins with $a_{n+1}$. By Ramsey's theorem, we can make the sequences $I_1,\ldots,I_n$ mutually indiscernible over $AI_{n+1}$ which gives what we want.
\end{proof}

Here is a simple example where dp-rank is not equal to $\acl$-dimension. Take $L=\{R\}$ and $T$ says that $R$ defines a graph-theoretic tree (a graph with no cycle) where each node has infinite degree. Given two points $a,b$, either $a$ and $b$ are in different connected components, or there is a unique path between $a$ and $b$. The length of this path is called the distance between $a$ and $b$. In this latter case, all the elements in the path are in $\acl(a,b)$. Furthermore $\acl(a)=\{a\}$ for any $a$. Take $aRb$, then $a,b$ are $\acl$-independent, however one can check that $\dpr(a,b)=1$.

Note that $T$ is $\omega$-stable: over a model $M$ there is a unique type of Morley rank $\omega$ at infinite distance of all $a\in M$, and for each $n<\omega$ and $a\in M$, there is a unique type of Morley rank $n$ of an element at distance $n$ from $a$ and at distance $>n$ from all other points of $M$. Also $T$ is dp-minimal: this last fact can be checked directly, or can be seen to follow from \cite[Theorem 4.7]{dpmin} which says that any order-theoretic tree is dp-minimal.

\subsection{Strong witnesses of dp-rank}

The main technical result of this section is a generalisation of $\bullet_2$ above which does not involve $\acl$-independence.

Let $A$ be a set of parameters and $I=(c_i:i\in \mathcal I)$ an $A$-indiscernible sequence indexed by a dense order $\mathcal I$. We will say that a tuple $\bar a$ \emph{breaks} $I$ over $A$ if for some $u\in \mathcal I$, $\bar a$ breaks $I$ at $c_u$ which means that there are $v<u<w$ and a formula $\phi(\bar a,y)$ with parameters in $A$ such that either:

$\cdot$ for all $i\in (v,w)$, $\phi(\bar a,c_i)$ holds if and only if $i=u$

\noindent
or

$\cdot$ for all $i\in (v,w)\setminus \{u\}$, $\phi(\bar a,c_i)$ holds if and only if $i>u$.

In particular, $I$ is not indiscernible over $A\bar a$.

Let $I=(c_i:i\in \mathbb Q)$ be $A$-indiscernible and $\bar a$ a tuple. Assume that $\bar a$ breaks the sequence $I$ at $k$ different places, then $\dpr(\bar a/A)\geq k$. Indeed, assume for example that $\bar a$ breaks $I$ at $c_0,\ldots,c_{k-1}$. We can divide $I$ into $k$ sequences $I_0=(c_i:i<1/2)$, $I_1=(c_i:1/2<i<3/2)$, $\ldots$, $I_{k-1}=(c_i:(2k-1)/2<i)$. The sequences $(I_i:i<k)$ are mutually indiscernible over $A$ and none remains indiscernible over $A\bar a$.

The following is an easy exercise on indiscernible sequences.

\begin{lemme}\label{lem_witnessbreak}
Let $p\in S(A)$ and $n<\omega$. Assume that $\dpr(p)\geq n$ and let $a\models p$, then there are sequences $I_0,\ldots,I_{n-1}$ mutually indiscernible over $A$, $I_k=(c^k_i:i\in \mathbb Q)$, such that for each $k<n$, $a$ breaks the sequence $I_k$ at $c^k_0$.
\end{lemme}
\begin{proof}
Start with sequences $J_0,\ldots,J_{n-1}$ witnessing that $\dpr(\tp(a/A))\geq n$. So the $J_k$'s are mutually indiscernible over $A$ and none of them remains indiscernible over $Aa$. Without loss, assume that each $J_k$ is indexed by a very saturated dense linear order with no endpoints.

Write $J_0=(c_i:i\in \mathcal J)$. As $J_0$ is not indiscernible over $Aa$, there are $u<v \in \mathcal J$ and a formula $\phi(x;y,m)$ with parameters $m\in A \cup (c_i:i\in \mathcal J \setminus [u,v])$ such that $\models \phi(a;c_u,m)\wedge \neg \phi(a;c_v,m)$. Let $\mathcal J'$ be an interval of $\mathcal J$ containing $u,v$ and disjoint from the indices of elements of $m\cap J_0$. Let $J'_0= (c_i\hat{~}m:i\in \mathcal J')$. By NIP (finite alternation), one can partition $\mathcal J'$ into finitely many convex subsets such that the truth value of $\phi(a;y\hat{~}z)$ is constant on each. It is then easy to extract from $J'_0$ a subsequence $I_0$ indexed by $\mathbb Q$ such that $a$ breaks $I_0$ at some point.

Doing inductively the same for each $J_k$, we obtain what we want.
\end{proof}

\begin{prop}[$T$ is dp-minimal]\label{prop_mainlemma}
Assume that $\dpr(a_0,\ldots,a_{n-1}/A)=r$, then for some indices $i_1,\ldots,i_r \in n$, there are (non-constant) mutually indiscernible sequences (over $A$) $I_1,\ldots,I_r$ starting respectively with $a_{i_1},\ldots,a_{i_r}$.
\end{prop}
\begin{proof}
We prove the result by induction on $n$. Assume that $\dpr(\bar a,b/A)=r$, where $\bar a=(a_0,\ldots,a_{n-1})$ and let $J_1,\ldots,J_r$ be given by Lemma \ref{lem_witnessbreak} applied to $\bar a\hat{~} b$. If $\dpr(\bar a/A)=r$, we conclude by induction. Otherwise, there is some sequence, say $J_1$, which is indiscernible over $A\bar a$.

Write, $J_1=(c_i:i\in \mathbb Q)$ such that $\bar a\hat{~}b$ breaks $J_1$ at $c_0$. Set $b_0=b$ and for every $0<k<\omega$, find some $b_k$ such that $$\tp(\bar a,b_k,(c_{i+k})_{i\in \mathbb Q}/A)=\tp(\bar a,b,(c_i)_{i\in \mathbb Q}/A).$$

In particular $\bar a\hat{~}b_k$ breaks the sequence $J_1$ at $c_k$.

Let $\mathbf b=(b'_i:i<\omega)$ be a sequence indiscernible over $A\bar a$ and realising the EM-type of $(b_k:k<\omega)$ over $A\bar a$.

\smallskip
\underline{Claim}: $\dpr(\bar a/A\mathbf b)\geq r-1$.

Assume that $\dpr(\bar a/A\mathbf b)\leq r-2$. Then we cannot construct $r-1$ sequences as in the statement of the proposition. By compactness, we can find some formula $\phi(x_{\bar a},\bar b)\in \tp(\bar a/A\mathbf b)$ which ensures this (where $\bar b\in \mathbf b$). Then by the induction hypothesis, any $\bar a'$ satisfying $\phi(x_{\bar a},\bar b)$ has dp-rank over $A\bar b$ which is $\leq r-2$. Also, again by compactness, there is some formula $\psi(\bar y)\in \tp(\bar b/A)$ such that the same holds for $\phi(x_{\bar a},\bar b')$ whenever $\bar b'\models \psi(\bar y)$.

By construction of $\mathbf b$, we can find such a $\bar b'$ in the original sequence $(b_k:k<\omega)$. Let $m=|\bar b'|$ and without loss $b\in \bar b'$. So we have $\dpr(\bar a/A\bar b')\leq r-2$. Consider the tuple $\bar b'\hat{~}\bar a$. By sub-additivity its dp-rank over $A$ is $\leq m+r-2$. On the other hand, it breaks the sequence $J_1$ at $m$ different places, and it also breaks each of the sequences $J_2,\ldots,J_r$. Hence $\dpr(\bar b',\bar a/A)\geq m+r-1$. This contradiction proves the claim.

\smallskip
Now: we have $\dpr(\bar a/A\mathbf b)\geq r-1$ and $\mathbf b$ is indiscernible over $A\bar a$. As all the points in the original sequence $(b_k:k<\omega)$ have the same type over $A\bar a$, we may assume that $b$ is in $\mathbf b$. By induction hypothesis, we find sequences $I_1,\ldots,I_{r-1}$ each starting with a point from the tuple $\bar a$ and mutually indiscernible over $A\mathbf b$. Let $I_r$ be indiscernible over $AI_1,\ldots,I_{r-1}\bar a$ and realise the EM-type of $\mathbf b$ over it. As all elements from $\mathbf b$ have the same type over $A\bar a$, we may assume that the sequence $I_r$ starts with $b$. So we are done.
\end{proof}

Note that conversely, if there are sequences $I_1,\ldots,I_r$ as in the statement of the theorem, then we have $\dpr(a_1,\ldots,a_n/A)\geq r$, as witnessed by those sequences.

Part of the following corollary was observed during the proof. The third bullet is immediate, and the other two follow from the proposition by compactness.

\begin{cor}\label{cor_one}
Assume that $T$ is dp-minimal.

$\bullet_1$ If $\dpr(\bar a/A)=r$, then there is $\phi(\bar x)\in \tp(\bar a/A)$ such that $\dpr(\phi(\bar x))=r$.

$\bullet_2$ Let $\phi(x;y)$ be a formula over $A$. Then the set $\{c:\dpr(\phi(x;c))\leq r\}$  is open over $A$.

$\bullet_3$ Assume that $\dpr(a_1,\ldots,a_n/A)=r$, then there are $i_1,\ldots,i_r$ such that $\dpr(a_{i_1},\ldots,a_{i_r}/A)=r$.
\end{cor}

We give examples of theories of finite rank where the first two bullets fail.

Let $L=\{E_n,F_n,c :n<\omega\}$, where $E_n$ and $F_n$ are binary relations and $c$ is a constant symbol. The axioms of $T$ say that $E_n$ and $F_n$ are equivalence relations with only infinite classes. Each $E_{n+1}$ (resp. $F_{n+1}$) refines $E_n$ (resp. $F_n$) and the $F_n$'s are cross-cutting with respect to the $E_n$'s. The theory $T$ admits elimination of quantifiers in the language $L$ and is stable.

Consider the type $\{xE_n c \wedge xF_n c:n<\omega\}$. Then that type has dp-rank 1, it is in fact a minimal type. However any formula in it has dp-rank 2.

To obtain an example where the second bullet fails, we modify slightly the previous one. Consider a two sorted structure $(M,S)$. The first sort $M$ is a model of the previous theory. The second sort $S$ is isomorphic to $(\omega;<)$. There is in addition a binary relation $R(x;s)\subseteq M\times S$ interpreted so that $R(a;k)$ holds if and only if $M\models a E_k c \wedge a F_k c$. It is not too hard to see that this theory is NIP. Now in a saturated model, $\dpr(R(x;s))\leq 1$ holds if and only if $\bigwedge_{n<\omega} s>n$, which is not an open condition.

In fact those two theories can be interpreted in a dp-minimal theory. For the first one, simply take $L_0=\{e_n(x;y)\}$ and $M_0$ an $L_0$-structure where each $e_n$ defines an equivalence relation with only infinite classes and $e_{n+1}$ refines $e_n$. Then the first structure $M$ is definable in $M_0^2$.

To deal with the second one, add a sort $(\omega,<)$ to $M_0$ and a binary predicate $r(x;s)$ interpreted so that $r(M_0,n)$ is some $e_n$-class and the sequence $(r(M_0,n):n<\omega)$ is decreasing with non empty intersection. Call $M_1$ the resulting structure. The structure $(M,S)$ above can be defined in $M_1^2$.

We sketch a proof that $M_1$ is dp-minimal. To obtain quantifier elimination, first add to $(\omega,<)$ predicates $d_n(x,y)$ saying that $x$ and $y$ are at distance $n$, then add a function symbol $f$ from the main sort to the order sort defined so that $f(x)$ is maximal such that $R(x;f(x))$ holds (or equal to 0 if no such value exists). Now consider a point $x$ and two mutually indiscernible sequences $I=(\bar a_i:i<\omega)$ and $J=(\bar b_i:i<\omega)$. Without loss (using $f$), everything lives in the main sort. As we have EQ in a binary language, we may assume that $\bar a_i=a_i$ and $\bar b_i=b_i$ are singletons. Also, we may assume that none of the $a_i$'s or $b_i$'s is equal to $x$. Then the sequence $I$ can fail to be indiscernible over $x$ for one of two reasons: either for some $n,i<\omega$, $\neg (a_0 e_n a_1)$ and $x e_n a_i$ \underline{or} $f(a_i)\neq 0$ for all $i$, $f(x)\neq 0$ and the sequence $(f(a_i):i<\omega)$ is not indiscernible over $f(x)$. The same goes for $J$. It is then routine to check that none of the four senari can happen.

\subsection{Characterising dp-rank}

We now aim at showing $\oplus_3$ from Theorem \ref{th_summary} which says that a set of maximal dp-rank is a product of 1-dimensional sets minus a hypersurface. To simplify the exposition, we first deal with an easy case, when the structure is linearly ordered.

\subsection{The linearly ordered case}\label{sec_linear}

In this section $(M,\leq)$ is a dp-minimal densely ordered structure and assume that any definable set in dimension 1 is the union of an open set and finitely many points. By Theorem \cite[Theorem 3.6]{dpmin}, this holds in particular if $M$ expands a divisible ordered group.


An \emph{open box} of $\monster^n$ is a definable set of the form $\{(x_0,\ldots,x_{n-1}): a_k<x_k<b_k\}$ for some $a_k,b_k\in \monster$, $a_k<b_k$. Note that an open box of $\monster^n$ has dp-rank $n$. A tuple $\bar a$ lies in the interior of a set $\phi(\bar x)$ if it lies in some open box included in $\phi(\bar x)$. This is a definable condition, hence the interior of a definable set is again definable.

\begin{prop}
With the assumptions above, let $\phi(\bar x)\in L(A)$ be a definable subset of $M^n$ which is of dp-rank $n$. Then $\phi(\bar x)$ has non-empty interior (that is, contains some $n$-dimensional open box).
\end{prop}
\begin{proof}
We prove the result by induction on $n$. By assumption, the result is true for $n=1$. Assume that we know it for $n$. Consider a formula $\phi(x,\bar y)$, $|\bar y|=n$, of dp-rank $n+1$. By Proposition \ref{prop_mainlemma}, we can find two non-constant mutually indiscernible sequences $(a_i:i<\omega)$ and $(\bar b_j:j<\omega)$ such that $\phi(a_i,\bar b_j)$ holds for all $i,j$. Pick some $i<\omega$ and consider the set $\phi(a_i,\bar y)$. This is a subset of $M^n$, which contains each $\bar b_j$. Let $\psi(a_i,\bar y)$ be its interior. Then by induction hypothesis, the set $\phi(a_i,\bar y)\wedge \neg \psi(a_i,\bar y)$ has dp-rank $<n$. Hence all the $\bar b_j$'s lie in $\psi(a_i,\bar y)$.

Fix $j<\omega$. Then by compactness, there is some open box $\theta_j(\bar y)$ defined over $\monster$ such that $\bar b_j\models \theta_j(\bar y)$ and $\theta_j(\bar y)$ is included in each $\psi(a_i,\bar y)$. Consider the set of all $a\in \monster$ such that $\phi(a;\monster)$ contains $\theta_j(\monster)$. This is a definable set which contains all the $a_i$'s. In particular, it is infinite and therefore has non-empty interior. Let $\psi_j(x)$ be an open interval in it. Then $\psi_j(x)\wedge \theta_j(\bar y)$ is an open box included in $\phi(x,\bar y)$.
\end{proof}

Using Corollary \ref{cor_one} we conclude that the dp-rank of a set $X$ coincides with the maximal $n$ such that some projection of $X$ to $M^n$ has non-empty interior. One therefore has a nice dimension theory to work with. In fact, one could hope to prove that definable sets in any dimension are tame in some sense: take for example a definable set $X$ in $M^2$. Then we know from the theorem above that $\overline{X}\setminus X$ has dp-rank 1. It should then be possible to show that such a set cannot be too complicated. Since the plane can be linearly ordered by a lexicographic ordering, the results in \cite{dpmin} might be relevant. We will not pursue this.

\begin{cor}
Dp-rank is definable: for every formula $\phi(\bar x;\bar y)$ the set of tuples $\bar b$ such that $\dpr(\phi(\bar x;\bar b))=k$ is a definable set.
\end{cor}
\begin{proof}
One can express in a first order way the fact that a set contains some $k$-dimensional box. Then the result follows from the remark above.
\end{proof}

\subsection{The distal case}

We now generalise the previous result to any dp-minimal theory which has no generically stable types. First recall some definitions: A generically stable type $p$ is a global type which is both definable and finitely satisfiable in some small model $M$. Equivalently, it is an $M$-invariant type whose Morley sequence is totally indiscernible. The restriction of a generically stable type to a subtuple of variables is again generically stable. Hence if there is a non-realised generically stable type of some arity, there is one of arity one. Clearly, if the structure admits a linear order, then there is no non-realised generically stable type, since there is no non-constant totally indiscernible sequence.

An NIP theory is called \emph{distal} (\cite{distal},\cite[Chapter 9]{NIPbook}) if the following property is satisfied: whenever $I_0+I_1+I_2$ is an $A$-indiscernible sequence of tuples, $I_0$ and $I_2$ are infinite and $\bar b$ is a tuple, if $I_0+I_2$ is indiscernible over $A\bar b$, then $I_0+I_1+I_2$ is indiscernible over $A\bar b$. It is easy to see from the definition that a distal theory cannot have a totally indiscernible, non-constant, sequence (take $\bar b$ to be a tuple in $I_1$). In particular, it cannot have a non-realised generically stable type. It is shown in \cite[Corollary 2.30]{distal}, also \cite[Corollary 9.19]{NIPbook}, that the converse holds for dp-minimal theories: A dp-minimal theory $T$ is distal if and only if it has no non-realised generically stable types. In particular, any linearly ordered dp-minimal theory is distal (hence any o-minimal or weakly-o-minimal theory) and also $Th(\mathbb Q_p)$ is distal.

\begin{thm}\label{th_rectangle}
Assume that $T$ is distal (and dp-minimal). Let $\phi(\bar x)\in L(A)$ have dp-rank $n=|\bar x|$ over $A$. Then $\phi(\bar x)$ contains a product $\theta_0(x_0)\wedge \cdots \wedge \theta_{n-1}(x_{n-1})$ where each $\theta_k(x_k)\in L(\monster)$ defines an infinite set.
\end{thm}
\begin{proof}
We prove the result by induction on $n$. For $n=1$ it is clear. Assume that we know it for $n$ and let $\phi(x,\bar y)$ have dp-rank $n+1$ over $A$, $|\bar y|=n$. Take $(a_0,b_1,\ldots,b_n)$ satisfying $\phi(x,\bar y)$ such that $\dpr(a_0,\bar b/A)=n+1$. By Proposition \ref{prop_mainlemma}, we can find sequences $(I,J_1,\ldots, J_n)$ mutually indiscernible over $A$ such that $I$ starts with $a_0$ and $J_k$ starts with $b_k$. Without loss, $I$ is ordered by $\omega+\omega$ and we can write $I=(a_i:i<\omega+\omega)$. By indiscernability, $\phi(a_i,\bar b)$ holds for all $i$. Also, $\dpr(\bar b/IA)=n$ as witnessed by the sequences $(J_1,\ldots,J_n)$.

Let $I_0=(a_i:i<\omega)$ and $I_1=(a_i:\omega< i<\omega+\omega)$. By distality of $T$, for any $a$ such that $I_0+(a)+I_1$ is indiscernible over $A$, $\phi(a,\bar b)$ holds. Therefore by compactness, there is a formula $\theta_0(x)\in \tp(a_\omega/IA)$ such that $\models \theta_0(x)\rightarrow \phi(x,\bar b)$. Let $\psi(\bar y)=\forall x(\theta_0(x)\rightarrow \phi(x,\bar y))$. Then $\psi(\bar y)$ is a formula over $IA$ satisfied by $\bar b$. As $\dprk(\bar b/IA)=n$, also $\dpr(\psi(\bar y))=n$. We now apply the induction hypothesis to $\psi(\bar y)$ to obtain $\theta_1(x_1),\ldots,\theta_n(x_n)$.
\end{proof}

Note that conversely, if a set contains such a conjunction, then it has dp-rank $n$.


\subsection{The general case}

We now deal with the general case.
%
%

\begin{defi}
A definable set $\phi(x_0,\ldots,x_{n-1})$ is called a \emph{hypersurface} if for every $a_0$, the set $\phi(a_0,x_1,\ldots,x_{n-1})$ has dp-rank $<n-1$.
\end{defi}

By convention, the formula $x_0\neq x_0$ is the only hypersurface in dimension 1.

Note that a hypersurface $\phi(x_0,\ldots,x_{n-1})$ has dp-rank $<n$.

\begin{thm}[$T$ is dp-minimal]\label{th_rectanglend}
Let $\phi(\bar x)\in L(A)$ have dp-rank $n=|\bar x|$. Then there are formulas $\theta_k(x_k)\in L(\monster)$, $k=0,\ldots,n-1$ defining infinite sets and a hypersurface $\psi(\bar x)\in L(\monster)$ such that $\bigwedge_{k<n} \theta_k(x_k) \rightarrow (\phi(\bar x)\vee \psi(\bar x))$.
\end{thm}
\begin{proof}
We prove the result by induction on $n$. It is clear for $n=1$. Assume that we know it for $n$ and let $\phi(x_0,\bar x)\in L(A)$ have dp-rank $n+1$ over $A$. As in the proof of Theorem \ref{th_rectangle}, we can find a sequence $I=(a_i:i<\omega)$ and $\bar b$ such that $I$ is indiscernible over $A\bar b$, $\dprk(\bar b/IA)=n$ and $\phi(a_i,\bar b)$ holds for all $i$.

By NIP, we can build a maximal sequence $(a'_0,\ldots,a'_{l-1})$, such that:

$\cdot$ $I':=I'_0+(a_i:i\geq l)$ is indiscernible over $A$, where the sequence $I'_0=(a_0,a'_0,a_1,a'_1,\ldots,a_{l-1},a'_{l-1})$;

$\cdot$ $\neg \phi(a'_k,\bar b)$ holds for all $k<l$;

$\cdot$ $\dprk(\bar b/IA+\{a'_k:k<l\})=n$.

Take $a_*$ such that the sequence $I'_0+(a_l,a_*)+(a_i:i>l)$ is indiscernible over $A$. Let $q(\bar x)=\tp(\bar b/AI')$. By maximality of the sequence $(a'_k:k<l)$ the partial type $q(\bar x)\wedge \neg \phi(a_*,\bar x)$ has dp-rank $<n$. By continuity (Corollary \ref{cor_one}, $\bullet_1$), there is a formula $\zeta(\bar x)\in q(\bar x)$ such that $\dprk(\zeta(\bar x)\wedge \neg\phi(a_*,\bar x))<n$. By Corollary \ref{cor_one}, $\bullet_2$, there is some formula $\theta_0(x_0)\in \tp(a_*/AI')$ such that $\dprk(\zeta(\bar x)\wedge \neg \phi(a',\bar x))<n$ for all $a'$ satisfying $\theta_0(x_0)$. Let $\psi_0(x_0,\bar x)=\theta_0(x_0)\wedge \zeta(\bar x)\wedge \neg\phi(x_0,\bar x)$. By construction $\psi_0(x_0,\bar x)$ is a hypersurface.

The induction hypothesis applied to $\zeta(\bar x)$ gives $\theta_1(x_1),\ldots,\theta_n(x_n)$ and a hypersurface $\psi'(\bar x)$. Then define $\psi(x_0,\bar x)=\psi_0(x_0,\bar x)\vee \psi'(\bar x)$.

By unwinding the definitions, one sees that the formulas $\theta_0(x_0),\ldots, \theta_n(x_n)$ and $\psi(x_0,\bar x)$ have the required properties.
\end{proof}

Notice that conversely, if we can find such $\theta_k(x_k)$ and hypersurface $\psi(\bar x)$, then the set $\phi(\bar x)$ has dp-rank $n$.

As already mentioned in the introduction, the hypersurface is necessary: take for example the formula $x_0\neq x_1$ in the theory of equality. It has dp-rank 2, but does not contain a product $\theta_0(x_0)\wedge \theta_1(x_1)$ of infinite 1-dimensional sets.

%

\begin{rem}
In many cases, I expect that the formula $\psi(\bar x;\bar y)$ defining the hypersurface can be chosen to be stable. However, I do not know a sufficient condition that would ensure that.
\end{rem}

\begin{cor}
Assume that $T$ is dp-minimal and eliminates $\exists^{\infty}$. Let $\phi(\bar x;\bar y)$ be a formula and $k<\omega$. Then the set of $\bar b$'s such that $\dprk(\phi(\bar x;\bar b))=k$ is definable.
\end{cor}
\begin{proof}
We show this by induction on $n=|\bar x|$. By Corollary \ref{cor_one}, it is enough to treat the case $k=n$.

A formula in dimension 1 has dp-rank 1 if and only if it is infinite. Thus the case $n=1$ follows from elimination of $\exists^{\infty}$. Assume that we have established the result for $n$ and let $\phi(x_0,\bar x;\bar y)$ be a formula, $\bar x=(x_1,\ldots,x_n)$. Let $\bar b$ be such that $\dpr(\phi(x_0,\bar x;\bar b))=n$. Then Theorem \ref{th_rectanglend} gives us formulas $\theta_k(x_k;\bar e)$, $k\leq n$ and a hypersurface $\psi(x_0,\bar x;\bar e)$, where we have made the parameters $\bar e$ appear. By induction hypothesis and the $n=1$ case, the fact that $\theta_k(x_k;\bar e)$ is infinite for all $k$ and the fact that $\psi(x_0,\bar x;\bar e)$ is a hypersurface are expressible by a formula $\zeta(\bar e)$. (This is where it is important that $\psi(x_0,\bar x;\bar e)$ is a hypersurface and not merely a set of dp-rank $<n$.) It follows that for any $\bar b'$ such that
$$\models\exists \bar z\left [\zeta(\bar z)\wedge \left (\bigwedge_{k\leq n} \theta_k(x_k;\bar z) \impl (\phi(x_0,\bar x;\bar b') \vee \psi(x_0,\bar x;\bar z))\right )\right],$$
the set $\phi(x_0,\bar x;\bar b')$ has dp-rank $n$. Therefore the condition ``$\dpr(\phi(x_0,\bar x;\bar b))=n$" is an open condition in $\bar b$. However we know from Corollary \ref{cor_one}, $\bullet_2$, that it is also closed. Hence it is definable.
\end{proof}

\begin{problem}
Study to what extent some of those results can be generalised (in a weaker form) to arbitrary NIP theories of finite rank.
\end{problem}

\bibliographystyle{asl}
\bibliography{InvTypes}

\end{document}